
\documentclass{amsart}

\usepackage[latin1]{inputenc}
\usepackage[english]{babel}
\usepackage{indentfirst}
\usepackage{amssymb}
\usepackage{amsthm}
\usepackage{xcolor}
\usepackage[all]{xy}
\usepackage[mathscr]{eucal}


\newcommand{\N}{\mathbb{N}}
\newcommand{\erre}{\mathbb{R}}
\newcommand{\sub}{\subseteq}

\newcommand\Xs{X^\ast}

\newcommand\xs{x^\ast}

\newcommand{\Ys}{Y^{\ast}}
\newcommand\ys{y^\ast}

\def\epsilon{\varepsilon}
\newtheorem{theo}{Theorem}[section]
\newtheorem{lem}[theo]{Lemma}
\newtheorem{pro}[theo]{Proposition}
\newtheorem{cor}[theo]{Corollary}
\newtheorem{fact}[theo]{Fact}
\newtheorem{rem}[theo]{Remark}

\numberwithin{equation}{section}

\begin{document}


\title[Isometric factorization of vector measures\dots]{Isometric factorization of vector measures and applications to spaces of integrable functions}
\author[Nygaard]{Olav Nygaard}
\author[Rodr\'{i}guez]{Jos\'{e} Rodr\'{i}guez}

\address{Department of Mathematics\\ University of Agder\\ Servicebox 422\\
4604 Kristiansand\\ Norway}
\email{Olav.Nygaard@uia.no}
\urladdr{https://home.uia.no/olavn/}

\address{Dpto. de Ingenier\'{i}a y Tecnolog\'{i}a de Computadores\\Facultad de Inform\'{a}tica\\
Universidad de Murcia\\ 30100 Espinardo (Murcia)\\ Spain} 
\email{joserr@um.es}
\urladdr{https://webs.um.es/joserr/}

\thanks{The research of Jos\'{e} Rodr\'{i}guez was partially supported by {\em Agencia Estatal de Investigaci\'{o}n} [MTM2017-86182-P, grant cofunded by ERDF, EU] 
and {\em Fundaci\'on S\'eneca} [20797/PI/18]}

\subjclass[2010]{46B20; 46G10}%
\keywords{Vector measure; finite variation; space of integrable functions; integration operator; Davis-Figiel-Johnson-Pelcz\'{y}nski factorization}%
%

\begin{abstract}
Let $X$ be a Banach space, $\Sigma$ be a $\sigma$-algebra, and $m:\Sigma\to X$ be a (countably additive) vector measure.
It is a well known consequence of the Davis-Figiel-Johnson-Pelcz\'{y}nski factorization procedure that 
there exist a reflexive Banach space~$Y$, a vector measure $\tilde{m}:\Sigma \to Y$
and an injective operator $J:Y \to X$ such that $m$ factors as $m=J\circ \tilde{m}$.
We elaborate some theory of factoring vector measures and their integration operators with the help of the 
isometric version of the Davis-Figiel-Johnson-Pelcz\'{y}nski factorization procedure.
Along this way, we sharpen a result of Okada and Ricker that if the integration operator on $L_1(m)$ is weakly compact, then 
$L_1(m)$ is equal, up to equivalence of norms, to some $L_1(\tilde m)$ where $Y$ is reflexive; here 
we prove that the above equality can be taken to be isometric.  
\end{abstract}
\maketitle

\date{\today}


\maketitle

\section{Introduction}

Let throughout $(\Omega,\Sigma)$ be a measurable space, $X$ be a real Banach 
space, and $m:\Sigma \to X$ be a countably additive vector measure 
(not identically zero). Let us agree that $m$ being a vector measure automatically means that $m$ is countably additive and defined on some 
$\sigma$-algebra of subsets of some set.

The range of~$m$, i.e., the set $\mathcal{R}(m):=\{m(A):A\in \Sigma\}$ is relatively weakly compact by a classical result of Bartle, Dunford and Schwartz
(see, e.g., \cite[p.~14, Corollary~7]{die-uhl-J}).
So, the Davis-Figiel-Johnson-Pelcz\'{y}nski (DFJP) factorization method~\cite{dav-alt} applied to the closed absolute convex hull
$\overline{{\rm aco}}(\mathcal{R}(m))$ of~$\mathcal{R}(m)$ ensures
the existence of a reflexive Banach space~$Y$ and an injective operator $J:Y \to X$ such that $J(B_Y) \supseteq \mathcal{R}(m)$. Accordingly, $m$
factors as $m=J\circ \tilde{m}$ for some map $\tilde{m}:\Sigma\to Y$ which turns out to be a vector measure as well
(cf., \cite[Theorem~2.1(i)]{rod15}). In commutative diagram form:
$$
	\xymatrix@R=3pc@C=3pc{\Sigma
	\ar[r]^{m} \ar[d]_{\tilde{m}} & X\\
	Y  \ar@{->}[ur]_{J}  & \\
	}
$$
Note that the vector measure~$\tilde{m}$ need not have finite variation although $m$ has finite variation. Indeed, if $m$
does not have a Bochner derivative with respect to~$|m|$, then neither does $\tilde{m}$ (since $J$ is an operator) 
and so $\tilde{m}$ does not have finite variation, because $Y$ has the Radon-Nikod\'{y}m property
and $\tilde{m}$ is $|m|$-continuous (by the injectivity of~$J$). However, if $m$ has finite variation
and a Bochner derivative with respect to~$|m|$, then $m$ factors via a vector measure of finite variation taking values
in a separable reflexive Banach space. This result is implicit in the proof of \cite[Theorem~5.2]{rodpiazza-1} and we 
include it as Theorem~\ref{theo:RP} for the reader's convenience.

The previous way of factoring~$m$ is equivalent to applying the DFJP method to the (weakly compact) integration operator 
on the Banach lattice $L_\infty(m)$, i.e., 
$$
	I^{(\infty)}_m:L_\infty(m)\to X,
	\quad
	I^{(\infty)}_m(f):=\int_\Omega f\, dm,
$$ 
because one has ${\rm aco}(\mathcal{R}(m)) \sub I^{(\infty)}_m(B_{L_\infty(m)}) \sub 2\overline{{\rm aco}}(\mathcal{R}(m))$
(see, e.g., \cite[p.~263, Lemma~3(c)]{die-uhl-J}).
But there is still another approach which is based on factoring the integration operator on the (larger) Banach lattice $L_1(m)$
of all real-valued $m$-integrable functions defined on~$\Omega$, i.e.,
$$
	I_m:L_1(m)\to X,
	\quad
	I_m(f):=\int_\Omega f\, dm.
$$ 
Note, however, that $I_m$ need not be weakly compact and so in this case the DFJP method
gives a factorization through a non-reflexive space. The DFJP factorization was already applied to~$I_m$
in \cite{oka-ric} and \cite{rod15}. Okada and Ricker showed that if $I_m$ is weakly compact, then 
there exist a reflexive Banach space~$Y$, a vector measure $\tilde{m}:\Sigma \to Y$
and an injective operator $J:Y \to X$ such that $m=J\circ \tilde{m}$ and
$L_1(m)=L_1(\tilde{m})$ with {\em equivalent} norms (see \cite[Proposition~2.1]{oka-ric}).

In this paper we study the factorization of vector measures and their integration operators
with the help of the {\em isometric} version of the DFJP procedure developed by Lima, Nygaard and Oja~\cite{lim-nyg-oja}
(DFJP-LNO for short). 

The paper is organized as follows. 
In Section~\ref{section:preliminaries} we include
some preliminaries on spaces of integrable functions with respect to a vector measure and their integration operators,
as well as on the DFJP-LNO method.

In Section~\ref{section:lemmas} we obtain some results on factorization of integration operators that are a bit more general than applying the 
DFJP factorization procedure directly.

In Section~\ref{section:Iinfty} we present our main results. Theorem~\ref{theo:Iinfty} collects
some benefits of applying the DFJP-LNO factorization to $I^{(\infty)}_m$. Thus, one gets 
a reflexive Banach space~$Y$, a vector measure $\tilde{m}:\Sigma\to Y$ with $\|m\|(\Omega)=\|\tilde{m}\|(\Omega)$ and an injective
norm-one operator $J:Y \to X$ such that $I^{(\infty)}_m = J \circ I^{(\infty)}_{\tilde{m}}$.
Moreover, the special features of the DFJP-LNO factorization also provide the following interpolation type inequality:
\begin{equation}\label{eqn:intro}
	\|I^{(\infty)}_{\tilde{m}}(f)\|^2 \leq C \|m\|(\Omega) \|f\|_{L_\infty(m)} \|I^{(\infty)}_m(f)\|
	\quad\text{for all $f\in L_\infty(m)$,}
\end{equation}
where $C>0$ is a universal constant. As a consequence, $I^{(\infty)}_{\tilde{m}}$ factors through 
the Lorentz space $L_{2,1}(\|m\|)$ associated to the semivariation of~$m$ (Proposition~\ref{pro:Pisier}).

Theorem~\ref{theo:Im} gathers some consequences of the DFJP-LNO method
when applied to~$I_m$. In this case, one gets a factorization as follows:
$$
	\xymatrix@R=3pc@C=3pc{L_1(m)
	\ar[r]^{I_m} \ar[d]_{I_{\tilde{m}}} & X\\
	Y  \ar@{->}[ur]_{J}  & \\
	}
$$
where $Y$ is a (not necessarily reflexive) Banach space, $\tilde{m}:\Sigma\to Y$ is a vector measure and $J$ is an injective
norm-one operator. Now, the equality 
$$
	L_1(m)=L_1(\tilde{m})
$$
holds with {\em equal} norms.
Moreover,
an inequality similar to~\eqref{eqn:intro} is the key to prove that 
$\tilde{m}$ has finite variation (resp., finite variation and a Bochner derivative with respect to it) whenever $m$ does.
As a particular case, we get the isometric version of the aforementioned result of Okada and Ricker (Corollary~\ref{cor:Iweaklycompact}).

\section{Preliminaries}\label{section:preliminaries}

By an {\em operator} we mean a continuous linear map between Banach spaces.
The topological dual of a Banach space~$Z$ is denoted by~$Z^*$. 
We write $B_Z$ to denote the closed unit ball of~$Z$, i.e., $B_{Z}=\{z\in Z:\|z\|\leq 1\}$. 
The absolute convex hull (resp., closed absolute convex hull) of a set $S\sub Z$ is denoted by ${\rm aco}(S)$
(resp., $\overline{{\rm aco}}(S)$).

Our source for basic information on vector measures is \cite[Chapter~I]{die-uhl-J}.  
The symbol $|m|$ stands for the {\em variation} of~$m$, while its {\em semivariation} 
is denoted by~$\|m\|$. We write $x^*m$ to denote the composition of $x^*\in X^*$ and~$m$.
A set $A\in \Sigma$ is said to be {\em $m$-null} if $\|m\|(A)=0$ or, equivalently, $m(B)=0$ for every $B\in \Sigma$ with~$B\sub A$.
The family of all $m$-null sets is denoted by $\mathcal{N}(m)$. A {\em control measure} of~$m$
is a non-negative finite measure~$\mu$ on~$\Sigma$ such that $m$ is $\mu$-continuous, i.e., $\mathcal{N}(\mu) \sub \mathcal{N}(m)$; if 
$\mu$ is of the form $|x^*m|$ for some $x^*\in X^*$, then it is called a {\em Rybakov control measure}.
Such control measures exist for any vector measure (see, e.g., \cite[p.~268, Theorem~2]{die-uhl-J}).

\subsection{$L_1$-spaces of vector measures and integration operators}

A suitable reference for basic information on $L_1$-spaces of vector measures is \cite[Chapter~3]{oka-alt}. 
A $\Sigma$-measurable function $f:\Omega \to \erre$ is called {\em weakly $m$-integrable} if 
$\int_\Omega |f|\,d|\xs m| <\infty$ for every $\xs\in\Xs$. In this case, for each $A\in \Sigma$ there is $\int_A f \, dm\in X^{**}$
such that
\[
	\Big ( \int_A f \, dm\Big)(\xs)=\int_A f\,d(\xs m)
	\quad\text{for all $x^*\in X^*$}.
\]
By identifying functions which coincide $m$-a.e., the set $L_1^w(m)$ of all weakly $m$-integrable functions  
forms a Banach lattice with the $m$-a.e. order and the norm
\[
	\|f\|_{L^w_1(m)}:=\sup_{\xs\in B_{\Xs}}\int_\Omega |f|\,d|\xs m|.
\]

Given any Rybakov control measure~$\mu$ of~$m$, the space $L^w_1(m)$ {\em embeds continuously} into~$L_1(\mu)$, i.e.,
the identity map $L^w_1(m) \to L_1(\mu)$ is an injective operator. From this embedding one gets the following well known property:

\begin{fact}\label{fact:convergence}
Let $(f_n)$ be a sequence in $L^w_1(m)$ which converges in norm to some $f\in L^w_1(m)$. Then 
there is a subsequence $(f_{n_k})$ which converges to~$f$ $m$-a.e.
\end{fact}

A $\Sigma$-measurable function $f:\Omega \to \mathbb{R}$ is said to be $m$-integrable if it is weakly $m$-integrable
and $\int_A f\, dm\in X$ for all $A\in \Sigma$.
The closed sublattice of $L_1^w(m)$ consisting of all $m$-integrable functions is denoted by~$L_1(m)$. 
The Banach lattice $L_1(m)$ is order continuous and has a weak order unit (the function $\chi_\Omega$). 
The following result
of Curbera (see \cite[Theorem~8]{cur1}) makes the class of $L_1(m)$-spaces 
extremely interesting: {\em if $E$ is an order continuous Banach lattice with a weak order unit, then there exists 
an $E$-valued positive vector measure~$m$ such that $L_1(m)$ and $E$ are lattice isometric.} (A vector measure taking values in a Banach lattice~$E$
is said to be {\em positive} if its range is contained in the positive cone of~$E$.)

We write ${\rm sim}\, \Sigma$ to denote the set of all {\em simple functions} from~$\Omega$ to~$\erre$. 
Just as for scalar $L_1$-spaces, ${\rm sim}\, \Sigma$ is a norm-dense linear subspace of $L_1(m)$. Note that
$\int_\Omega \chi_A \, dm=m(A)$ and $\|\chi_A\|_{L_1(m)}=\|m\|(A)$ for all $A\in \Sigma$.

Any $m$-essentially bounded $\Sigma$-measurable function $f:\Omega \to \erre$ is $m$-integrable. 
By identifying functions which coincide $m$-a.e., the set $L_\infty(m)$ of all
$m$-essentially bounded $\Sigma$-measurable functions is a Banach lattice with the $m$-a.e. order and the $m$-essential sup-norm. Of course,
$L_\infty(m)$ is equal to the usual space $L_\infty(\mu)$ for any Rybakov control measure $\mu$ of~$m$.
It is known (see, e.g., \cite[Proposition~3.31]{oka-alt}) that: 
\begin{itemize}
\item[(i)] if $g\in L_\infty(m)$ and $f\in L_1(m)$, then $f g \in L_1(m)$ and 
$$
	\|fg\|_{L_1(m)}\leq \|f\|_{L_1(m)}\|g\|_{L_\infty(m)};
$$ 
\item[(ii)] the identity map
$\alpha_\infty:L_\infty(m)\to L_1(m)$ is an (injective) weakly compact operator with $\|\alpha_\infty\|=\|m\|(\Omega)$. 
\end{itemize}

The following formula
for the norm on~$L_1(m)$ will also be useful (see, e.g., \cite[Lemma~3.11]{oka-alt}):

\begin{fact}\label{fact:norming}
For every $f\in L_1(m)$ we have
$$
	\|f\|_{L_1(m)}=\sup_{g\in B_{L_\infty(m)}}
	\Big\| \int_\Omega fg \, dm \Big\|=
	\sup_{g\in B_{L_\infty(m)}\cap {\rm sim}\, \Sigma}
	\Big\| \int_\Omega fg \, dm \Big\|.
$$
\end{fact}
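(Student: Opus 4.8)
The plan is to prove the chain of equalities
$$
\|f\|_{L_1(m)}
\;\ge\; \sup_{g\in B_{L_\infty(m)}}\Big\|\int_\Omega fg\,dm\Big\|
\;\ge\; \sup_{g\in B_{L_\infty(m)}\cap {\rm sim}\,\Sigma}\Big\|\int_\Omega fg\,dm\Big\|
\;\ge\; \|f\|_{L_1(m)},
$$
so that all three quantities coincide. The middle inequality is trivial, since $B_{L_\infty(m)}\cap{\rm sim}\,\Sigma\subseteq B_{L_\infty(m)}$, so the real content is the first and last inequalities.

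\emph{First inequality (upper bound).} Fix $f\in L_1(m)$ and $g\in B_{L_\infty(m)}$. By property~(i) recalled above (from \cite[Proposition~3.31]{oka-alt}), $fg\in L_1(m)$ with $\|fg\|_{L_1(m)}\le\|f\|_{L_1(m)}\|g\|_{L_\infty(m)}\le\|f\|_{L_1(m)}$. Now I would use the definition of the norm on $L^w_1(m)$ together with the elementary estimate $\big\|\int_\Omega h\,dm\big\|\le\|h\|_{L_1(m)}$, valid for every $h\in L_1(m)$: indeed, for $x^*\in B_{X^*}$ one has $\big|x^*\big(\int_\Omega h\,dm\big)\big|=\big|\int_\Omega h\,d(x^*m)\big|\le\int_\Omega|h|\,d|x^*m|\le\|h\|_{L^w_1(m)}=\|h\|_{L_1(m)}$, and taking the supremum over $x^*\in B_{X^*}$ gives the claim. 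Applying this with $h=fg$ yields $\big\|\int_\Omega fg\,dm\big\|\le\|f\|_{L_1(m)}$, and taking the supremum over $g\in B_{L_\infty(m)}$ gives the first inequality.

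\emph{Last inequality (lower bound) --- the main obstacle.} Here I must show $\|f\|_{L_1(m)}\le\sup_{g\in B_{L_\infty(m)}\cap{\rm sim}\,\Sigma}\big\|\int_\Omega fg\,dm\big\|$. The natural idea is to fix a Rybakov control measure $\mu=|x_0^*m|$ of~$m$ and exploit the continuous embedding $L^w_1(m)\hookrightarrow L_1(\mu)$. By the scalar $L_1(\mu)$--$L_\infty(\mu)$ duality, for each $\epsilon>0$ one can pick $g\in B_{L_\infty(\mu)}\cap{\rm sim}\,\Sigma$ (e.g.\ built from the sign of~$f$ on finitely many pieces, so genuinely a simple function and $m$-a.e.\ bounded by~$1$) with $\int_\Omega fg\,d\mu\ge\int_\Omega|f|\,d\mu-\epsilon$. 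The difficulty is that $\|f\|_{L_1(m)}$ is a \emph{supremum over all} $x^*\in B_{X^*}$ of $\int_\Omega|f|\,d|x^*m|$, not just the single $\mu$, and $\int_\Omega fg\,d\mu$ only controls the $x_0^*$-piece. The fix is to first choose $x^*\in B_{X^*}$ with $\int_\Omega|f|\,d|x^*m|>\|f\|_{L_1(m)}-\epsilon$, then work with the (non-negative, finite) measure $\nu:=|x^*m|$, choosing a simple $g$ with $|g|\le 1$ $m$-a.e.\ and $g\cdot\frac{d(x^*m)}{d\nu}$ close to~$1$ where $|f|$ is large, so that $x^*\big(\int_\Omega fg\,dm\big)=\int_\Omega fg\,d(x^*m)\ge\int_\Omega|f|\,d\nu-\epsilon>\|f\|_{L_1(m)}-2\epsilon$. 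Since $\big\|\int_\Omega fg\,dm\big\|\ge x^*\big(\int_\Omega fg\,dm\big)$, and $g\in B_{L_\infty(m)}\cap{\rm sim}\,\Sigma$, letting $\epsilon\downarrow 0$ finishes the argument. One should be slightly careful that $g$ can be chosen \emph{simultaneously} in ${\rm sim}\,\Sigma$ and with $\|g\|_{L_\infty(m)}\le 1$: approximating $\frac{d(x^*m)}{d\nu}$ (which has modulus~$1$ $\nu$-a.e.) by simple functions and then truncating handles this, since truncation does not increase the sup-norm and $m\ll\nu$ is not needed in this direction --- only $|x^*m|\le$ something, which is automatic. The remaining verifications (measurability, that the integrals are well-defined, interchanging the two suprema) are routine.
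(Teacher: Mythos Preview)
The paper does not actually give a proof of Fact~\ref{fact:norming}; it is simply quoted from \cite[Lemma~3.11]{oka-alt}. So there is nothing to compare against, and your task is just to check that your argument is correct.

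Your plan is sound and is essentially the standard proof. The upper bound and the trivial middle inequality are handled correctly. For the lower bound you have the right idea, but in the real setting it is simpler than you make it: having chosen $x^*\in B_{X^*}$ with $\int_\Omega|f|\,d|x^*m|>\|f\|_{L_1(m)}-\epsilon$ and written $\nu:=|x^*m|$, the Hahn decomposition gives $h:=\frac{d(x^*m)}{d\nu}=\chi_P-\chi_N$ for some $P,N\in\Sigma$, so $h$ is already a \emph{simple} function with $|h|=1$. Then $g:=h\cdot\mathrm{sign}(f)$ is $\Sigma$-measurable, takes only the values $-1,0,1$ (hence $g\in{\rm sim}\,\Sigma$ with $\|g\|_{L_\infty(m)}\le 1$ pointwise, so no concern about $m\not\ll\nu$), and
\[
x^*\!\Big(\int_\Omega fg\,dm\Big)=\int_\Omega fg\,d(x^*m)=\int_\Omega f\cdot h\,\mathrm{sign}(f)\cdot h\,d\nu=\int_\Omega |f|\,d\nu>\|f\|_{L_1(m)}-\epsilon.
\]
Thus no approximation or truncation is needed; the exact simple function does the job. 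Your remark about approximating $h$ would only be relevant in the complex case, where $|h|=1$ but $h$ need not be simple. With this simplification your argument is complete.
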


A fundamental tool in the study of the space $L_1(m)$ is the \emph{integration operator} $I_m: L^1(m)\to X$, which is the canonical map defined by 
$$
	I_m(f):=\int_\Omega f\, dm
	\quad\text{for all $f\in L_1(m)$.}
$$ 
Note that $\|I_m\|=1$ (see, e.g., \cite[p.~152]{oka-alt}).
We may of course also look at the integration operator defined on $L_\infty(m)$, i.e., the composition 
$$
	I^{(\infty)}_m:=I_m\circ\alpha_\infty:L_\infty(m)\to X,
	\qquad
	I^{(\infty)}_m(f)=\int_\Omega f\, dm
	\quad\text{for all $f\in L_\infty(m)$.}
$$ 
The operator $I^{(\infty)}_m$ is thus weakly compact and $\|I^{(\infty)}_m\|=\|m\|(\Omega)$.

\subsection{The isometric version of the Davis-Figiel-Johnson-Pelcz\'{y}nski procedure}\label{subsection:dfjp}

Let us quickly recall the main construction and results from \cite{dav-alt} together with the extra information obtained in~\cite{lim-nyg-oja}. 

Let $K \sub B_X$ be a closed absolutely convex set and fix $a\in (1,\infty)$. For each $n\in \N$, define the bounded absolutely convex set
\[
	K_n:=a^n K+a^{-n} B_X
\]
and denote by $\|\cdot\|_n$ the Minkowski functional defined by $K_n$, i.e.,
$$
	\|x\|_n:=\inf\{t>0: x\in tK_n\}
	\quad
	\text{for all $x\in X$.}
$$
Note that each $\|\cdot\|_n$ is an equivalent norm on~$X$. The following statements now hold:
\begin{enumerate}
\item[(i)] $X_K:=\{x\in X: \ \sum_{n=1}^\infty \|x\|_n^2 <\infty\}$
is a Banach space equipped with the norm 
$$
	\|x\|_K:=\sqrt{\sum_{n=1}^\infty \|x\|_n^2}.
$$
\item[(ii)] The identity map $J_K: X_K \to X$ is an operator with $\|J_K\|\leq \frac{1}{f(a)}$ and $K \sub f(a) B_{X_K}$, where
$$
	f(a):=\sqrt{\sum_{n=1}^\infty\left(\frac{a^n}{a^{2n}+1}\right)^2}.
$$
\item[(iv)] $J_K^{**}$ is injective (equivalently, $J^*_K(X^*)$ is norm-dense in $X_K^*$).
\item[(v)] For each $x\in K$ we have
\begin{equation}\label{eqn:power2}
	\|x\|_K^2 \leq \Big(\frac{1}{4}+\frac{1}{2\ln a}\Big) \|x\|. \tag{$K^2$}
\end{equation}
\item[(vi)] $J_{K}$ is a norm-to-norm homeomorphism when restricted to~$K$.
\item[(vii)] $J_K$ is a weak-to-weak homeomorphism when restricted to $B_{X_K}$.
\item[(viii)] $X_K$ is reflexive if and only if $K$ is weakly compact.
\end{enumerate}

Given a Banach space~$Z$ and a (non-zero) operator $T:Z\to X$, the previous procedure applied to
$K:=\frac{1}{\|T\|}\overline{T(B_Z)}$ gives a factorization
\begin{equation}\label{eqn:DFJPLNO}
	\xymatrix@R=3pc@C=3pc{Z
	\ar[r]^{T} \ar[d]_{T_K} & X\\
	X_K  \ar@{->}[ur]_{J_K}  & \\
	}
\end{equation}
where $T_K$ is an operator with $\|T_K\|\leq f(a)\|T\|$. The following statements hold:
\begin{enumerate}
\item[(ix)] $X_K$ is reflexive if and only if $T$ is weakly compact if and only if $T_K$ is weakly compact if and only if $J_K$ is weakly compact. 
\item[(x)] $T$ is compact if and only if $T_K$ is compact if and only if $J_K$ is compact. In this case, $X_K$ is separable.
\end{enumerate}

Let $\bar a$ be the unique element of $(1,\infty)$ such that 
$f(\bar a)=1$. When the previous method is performed with $a=\bar a$, we have
$\|T_K\|=\|T\|$ and $\|J_K\|=1$, and \eqref{eqn:DFJPLNO} is called the {\em DFJP-LNO factorization} of~$T$. 

\subsection{An observation on strong measurability}

The next lemma will be needed in the proof of Theorem~\ref{theo:Im}.

\begin{lem}\label{lem:function}
Let $K\sub B_X$ be a closed absolutely convex set and $a\in (1,\infty)$. Let $G:\Omega \to X$ be a function with $G(\Omega)\sub K$, 
$F:\Omega \to X_K$ be the function such that $J_K\circ F=G$, and $\mu$ be a non-negative finite measure on~$\Sigma$.
If $G$ is strongly $\mu$-measurable, then so is~$F$.
\end{lem}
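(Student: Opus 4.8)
The plan is to use the Pettis measurability theorem together with the homeomorphism property (vi) of the DFJP--LNO construction. Recall that a function into a Banach space is strongly $\mu$-measurable if and only if it is $\mu$-essentially separably valued and weakly $\mu$-measurable (i.e., $\xs\circ(\cdot)$ is measurable for every functional). So it suffices to check these two conditions for $F$, given that $G=J_K\circ F$ satisfies them.

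For the essential separable range: since $G$ is strongly $\mu$-measurable, there is an $m$-null --- rather, $\mu$-null --- set $N\in\Sigma$ such that $G(\Omega\setminus N)$ is contained in a separable subset of $X$; in fact, after removing a further null set, $G(\Omega\setminus N)$ is contained in the closed linear span of a countable set, hence separable. Now $G(\Omega\setminus N)\sub K$, and by (vi) the map $J_K$ restricted to~$K$ is a norm-to-norm homeomorphism onto its image $J_K(K)\sub X$. Therefore $F(\Omega\setminus N) = (J_K|_K)^{-1}(G(\Omega\setminus N))$ is homeomorphic to a separable subset of $X$, hence separable. So $F$ is $\mu$-essentially separably valued.

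For weak measurability: fix $\zs\in X_K^*$. By property (iv), $J_K^*(X^*)$ is norm-dense in $X_K^*$, so pick a sequence $(\xs_j)$ in $X^*$ with $J_K^*\xs_j\to\zs$ in $X_K^*$. For each~$j$, the function $\omega\mapsto \langle J_K^*\xs_j, F(\omega)\rangle = \langle \xs_j, J_K F(\omega)\rangle = \langle \xs_j, G(\omega)\rangle$ is $\mu$-measurable because $G$ is (weakly) $\mu$-measurable. Since $F(\Omega)\sub K\sub f(a)B_{X_K}$ is norm-bounded, $\langle J_K^*\xs_j, F(\cdot)\rangle \to \langle \zs, F(\cdot)\rangle$ uniformly on $\Omega$, hence pointwise, so $\langle \zs, F(\cdot)\rangle$ is $\mu$-measurable as a pointwise limit of measurable functions. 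Thus $F$ is weakly $\mu$-measurable, and by the Pettis measurability theorem $F$ is strongly $\mu$-measurable.

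The main point requiring care is the essential-separable-range step: one must ensure that the null set on which $G$ fails to be separably valued is the \emph{same} kind of set (a member of~$\Sigma$, $\mu$-null) so that removing it is legitimate, and then invoke (vi) correctly --- noting that $G$ genuinely takes values in~$K$ (so that the inverse of $J_K|_K$ is defined on all of $G(\Omega)$) and that a continuous injective image of a separable metric space is separable. Everything else is a routine application of Pettis's theorem combined with the density of $J_K^*(X^*)$ and the boundedness of~$K$ in~$X_K$. I expect no serious obstacle beyond bookkeeping.
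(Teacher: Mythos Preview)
Your proof is correct and follows essentially the same route as the paper's: both verify the hypotheses of the Pettis measurability theorem, and both handle weak (scalar) measurability via the norm-density of $J_K^*(X^*)$ in $X_K^*$ (property~(iv)) together with the boundedness of~$F$. The only difference is in the separability step: you invoke property~(vi), the norm-to-norm homeomorphism of $J_K|_K$, whereas the paper uses property~(vii), the weak-to-weak homeomorphism of $J_K|_{B_{X_K}}$ (combined with $K\subseteq f(a)B_{X_K}$ and the fact that weak and norm separability coincide). Both arguments are valid; your choice of~(vi) is arguably the more direct one, and the paper's own Remark~\ref{rem:2ndproof} gives yet a third route via the inequality~\eqref{eqn:power2}.
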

\begin{proof} 
Since $G$ is strongly $\mu$-measurable, there is $E\in \Sigma$ with $\mu(\Omega\setminus E)=0$ such that $G(E)$ is separable. 
Since $G(E) \sub K$, we have $F(E) \sub f(a) B_{X_K}$ (by~(ii)). The separability of~$G(E)$ and~(vii) imply that 
$F(E)$ is separable. On the other hand, $y^*\circ F$ is $\mu$-measurable for every $y^*\in J_K^*(X^*)$
(i.e., $G$ is scalarly $\mu$-measurable) and so the norm-density of $J_K^*(X^*)$ in~$X_K^*$ (property~(iv)) implies that
$F$ is scalarly $\mu$-measurable. An appeal to Pettis' measurability theorem 
(see, e.g., \cite[p.~42, Theorem~2]{die-uhl-J})
ensures that $F$ is strongly $\mu$-measurable.
\end{proof}

\begin{rem}\label{rem:2ndproof}
\rm The strong $\mu$-measurability of a Banach space-valued function~$h$ defined on a finite measure space $(\Omega,\Sigma,\mu)$ is characterized as follows: 
for each $\epsilon>0$ and each $A \in \Sigma$ with $\mu(A)>0$ there is $B \sub A$, $B\in \Sigma$ with $\mu(B)>0$,
such that $\|h(\omega)-h(\omega')\|\leq \epsilon$ for all $\omega,\omega'\in B$ (this is folklore, see \cite[Proposition~2.2]{cas-kad-rod-3}
for a sketch of proof). This characterization and the inequality
$$
	\|x-x'\|_K^2 \leq \Big(\frac{1}{2}+\frac{1}{\ln a}\Big) \|x-x'\|
	\quad
	\text{for all $x,x'\in K$}
$$
(which follows from~\eqref{eqn:power2} and the absolute convexity of~$K$) 
can be combined
to give another proof of Lemma~\ref{lem:function}.
\end{rem}

\section{General factorization results}\label{section:lemmas}

The following lemma is surely folklore to experts in vector measure theory, 
but as its proof requires some tools we provide a proof for the reader's convenience.

\begin{lem}\label{lem:dense}
Let $Y$ be a Banach space, $\Gamma \sub Y^*$ be a norm-dense set, and $\nu:\Sigma \to Y$ be a map such that $y^*\nu$ is 
countably additive for every $y^*\in \Gamma$. Then $\nu$ is a countably additive vector measure.
\end{lem}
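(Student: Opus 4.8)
The plan is to reduce the statement to a classical vector measure fact: a map is a countably additive vector measure if and only if it is finitely additive and ``subseries convergent'' in the appropriate sense, and in particular the Orlicz--Pettis theorem tells us that weak countable additivity (i.e.\ $y^*\nu$ countably additive for all $y^*\in Y^*$) already implies norm countable additivity. So the real content is to upgrade the hypothesis ``$y^*\nu$ is countably additive for every $y^*$ in a \emph{norm-dense} set $\Gamma$'' to ``$y^*\nu$ is countably additive for every $y^*\in Y^*$,'' after which Orlicz--Pettis finishes the job.

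First I would check finite additivity of $\nu$: for disjoint $A,B\in\Sigma$ and every $y^*\in\Gamma$ we have $y^*(\nu(A\cup B))=y^*(\nu(A))+y^*(\nu(B))$ (finite additivity is a consequence of countable additivity of $y^*\nu$), and since $\Gamma$ is norm-dense it is separating (indeed norming), so $\nu(A\cup B)=\nu(A)+\nu(B)$; similarly $\nu(\emptyset)=0$. Next I would establish that $\nu$ is \emph{bounded}, i.e.\ $\sup_{A\in\Sigma}\|\nu(A)\|<\infty$: for each fixed $y^*\in\Gamma$ the scalar measure $y^*\nu$ is countably additive, hence of bounded variation, so $\sup_{A}|y^*\nu(A)|<\infty$; thus the family $\{\nu(A):A\in\Sigma\}\sub Y$ is pointwise bounded on the norm-dense set $\Gamma$, hence (by density) weakly bounded on all of $Y^*$, hence norm-bounded by the uniform boundedness principle. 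Now let $M:=\sup_{A\in\Sigma}\|\nu(A)\|<\infty$.

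The key step is the density upgrade. Fix an arbitrary $y^*\in Y^*$ and a decreasing sequence $A_n\in\Sigma$ with $\bigcap_n A_n=\emptyset$; I must show $y^*(\nu(A_n))\to 0$ (equivalently, strong additivity at $\emptyset$, which together with finite additivity gives countable additivity of $y^*\nu$, and this for every $y^*$ gives countable additivity of $\nu$ by Orlicz--Pettis). Given $\epsilon>0$, pick $z^*\in\Gamma$ with $\|y^*-z^*\|<\epsilon/(3M)$. Since $z^*\nu$ is countably additive, there is $N$ with $|z^*(\nu(A_n))|<\epsilon/3$ for $n\ge N$. Then for $n\ge N$,
\[
	|y^*(\nu(A_n))| \le |(y^*-z^*)(\nu(A_n))| + |z^*(\nu(A_n))| \le \|y^*-z^*\|\,M + \frac{\epsilon}{3} < \frac{\epsilon}{3}+\frac{\epsilon}{3} < \epsilon.
\]
Hence $y^*(\nu(A_n))\to 0$, as required. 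I expect the main obstacle to be precisely the boundedness of $\nu$: without it the estimate in the displayed inequality collapses, so one genuinely needs the uniform boundedness principle (combined with the fact that each $y^*\nu$, being a countably additive scalar measure, is bounded). Once boundedness is in hand, the $\epsilon/3$ argument and an invocation of the Orlicz--Pettis theorem complete the proof; I would cite \cite[p.~22, Corollary~4]{die-uhl-J} (or the analogous statement there) for Orlicz--Pettis.
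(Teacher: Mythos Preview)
Your overall strategy is exactly the paper's: establish finite additivity from the separating property of $\Gamma$, prove that the range of $\nu$ is bounded, then run an $\epsilon/3$ density argument to upgrade countable additivity of $y^*\nu$ from $y^*\in\Gamma$ to all $y^*\in Y^*$, and conclude via Orlicz--Pettis. The final two steps are correct and match the paper essentially verbatim.

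The gap is in your boundedness argument. You write that $\{\nu(A):A\in\Sigma\}$ is pointwise bounded on the norm-dense set $\Gamma$, ``hence (by density) weakly bounded on all of $Y^*$''. This implication is false in general: take $Y=\ell_2$, let $\Gamma$ be the dense linear subspace of finitely supported vectors, and let $S=\{n e_n:n\in\N\}$. For each $y^*=(a_1,\dots,a_k,0,0,\dots)\in\Gamma$ one has $\sup_n|\langle y^*,ne_n\rangle|=\max_{1\le n\le k}|na_n|<\infty$, yet $S$ is unbounded. The uniform boundedness principle needs pointwise boundedness on a set of second category, and a norm-dense set need not be such. Your own $\epsilon/3$ step later shows exactly why the naive density argument is circular here: to pass from $z^*\in\Gamma$ to nearby $y^*$ you must already control $\|\nu(A)\|$.

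What rescues the argument---and what the paper invokes---is that $\{\nu(A)\}$ is not an arbitrary set but the range of a \emph{finitely additive} set function on a $\sigma$-algebra. The Dieudonn\'e--Grothendieck theorem (\cite[p.~16, Corollary~3]{die-uhl-J}) says precisely that a finitely additive $\nu:\Sigma\to Y$ is bounded as soon as $y^*\nu$ is bounded for every $y^*$ in a set that merely separates the points of~$Y$. Replace your density/UBP sentence with a citation of this result and your proof becomes complete and coincides with the paper's.
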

\begin{proof}
By the Orlicz-Pettis theorem (see, e.g., \cite[p.~22, Corollary~4]{die-uhl-J}), in order to prove that $\nu$
is countably additive it suffices to show that $y^*\nu$ is countably additive for every $y^*\in Y^*$.
Since $\Gamma$ separates the points of~$Y$, $\nu$ is finitely additive and has bounded range, by the Dieudonn\'{e}-Grothendieck theorem 
(see, e.g., \cite[p.~16, Corollary~3]{die-uhl-J}). 
Fix $y^*\in Y^*$. Let $(B_n)$ be a disjoint sequence in~$\Sigma$
and fix $\epsilon>0$. We can choose $y_0^*\in \Gamma$ such that $|y^*(\nu(A))-y_0^*(\nu(A))|\leq \epsilon$
for all $A\in \Sigma$. Since $y_0^* \nu$ is countably additive, there is $n_0\in \N$ such that  $|y_0^*(\nu(\bigcup_{n\geq n_1}B_n))|\leq \epsilon$ 
for every $n_1\geq n_0$. It follows that $|y^*(\nu(\bigcup_{n\geq n_1}B_n))|\leq 2\epsilon$ for every $n_1\geq n_0$.
This shows that $y^*\nu$ is countably additive.
\end{proof}

\begin{rem}\label{rem:Rainwater} \rm 
\begin{enumerate}
\item[(i)] In Lemma~\ref{lem:dense} we do not really need that $\Gamma$ is norm-dense. It suffices that $\Gamma$ is what one could call a 
{\em Rainwater set}, i.e., a set with the following property: if $(y_n)$ is a bounded sequence in~$Y$ and there is 
$y\in Y$ with $\ys(y_n)\to \ys(y)$ for every $\ys\in\Gamma$, then $y_n\to y$ weakly. 
See the proof of \cite[Proposition~2.9]{fer-alt-4}. The most general known Rainwater sets are (I)-generating sets, in particular James boundaries 
like the extreme points of~$B_{\Ys}$; see \cite{nyg4} for the fact that (I)-generating sets are Rainwater (this was proved independently by Kalenda, 
private communication) and \cite[Theorem~2.3]{fon-lin} for the deep result that James boundaries are (I)-generating.

\item[(ii)] If $Y$ contains no isomorphic copy of~$\ell_\infty$, then the assertion of Lemma~\ref{lem:dense} holds
for any set $\Gamma \sub Y^*$ which separates the points of~$Y$, by a result of Diestel and Faires~\cite{die-fai}
(cf., \cite[p.~23, Corollary~7]{die-uhl-J}).
\end{enumerate}
\end{rem}

The following lemma is essentially known (see, e.g., \cite[Lemma~3.27]{oka-alt}). We add 
an estimate for the norm of the inclusion operator.

\begin{lem}\label{lem:factorizationVM}
Suppose that $m$ factors as 
$$
	\xymatrix@R=3pc@C=3pc{\Sigma
	\ar[r]^{m} \ar[d]_{\tilde{m}} & X\\
	Y  \ar@{->}[ur]_{J}  & \\
	}
$$
where $Y$ is a Banach space, $\tilde{m}$ is a countably additive vector measure and $J$ is an injective operator. Then:
\begin{enumerate}
\item[(i)] $\mathcal{N}(m)=\mathcal{N}(\tilde m)$. 
\item[(ii)] $L_1(\tilde m)$ embeds continuously into $L_1(m)$ with norm $\leq \|J\|$, i.e., 
the identity map $L_1(\tilde m) \to L_1(m)$ is an injective operator with norm $\leq \|J\|$.
\item[(iii)] $I^{(\infty)}_m=J\circ I^{(\infty)}_{\tilde{m}}$.
\end{enumerate}
\end{lem}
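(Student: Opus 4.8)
The plan is to prove the three assertions in the order (i), (iii), (ii), since (iii) is immediate from the factorization and (ii) will rely on the characterization of $L_1(\tilde m)$-membership via the embedding into an $L_1$ of a control measure.

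First I would establish (i). The inclusion $\mathcal{N}(\tilde m) \sub \mathcal{N}(m)$ is trivial: if $\|\tilde m\|(A) = 0$ then $\tilde m(B) = 0$ for all $B \sub A$, so $m(B) = J(\tilde m(B)) = 0$ for all such $B$, hence $A \in \mathcal{N}(m)$. For the reverse inclusion, suppose $A \in \mathcal{N}(m)$, i.e. $m(B) = 0$ for all $B \in \Sigma$ with $B \sub A$. Then $J(\tilde m(B)) = 0$, and since $J$ is injective, $\tilde m(B) = 0$ for all such $B$; thus $A \in \mathcal{N}(\tilde m)$. This is the only place injectivity of~$J$ is genuinely used.

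Next, (iii) is a one-line computation: for $f \in L_\infty(m) = L_\infty(\tilde m)$ (the two spaces coincide as sets by~(i), since $m$-a.e.\ and $\tilde m$-a.e.\ mean the same thing) and any $x^* \in X^*$ one has, using that $x^* J \in Y^*$ and the defining property of the integral,
\[
	x^*\Big(J\Big(\int_\Omega f \, d\tilde m\Big)\Big) = \int_\Omega f \, d(x^*J\,\tilde m) = \int_\Omega f \, d(x^* m),
\]
because $x^* J \tilde m = x^* m$; as this holds for every $x^*$, we get $J(I^{(\infty)}_{\tilde m}(f)) = I^{(\infty)}_m(f)$. (One should first note $f \in L_1(\tilde m)$ here, which follows from $L_\infty(\tilde m) \sub L_1(\tilde m)$.)

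Finally, for (ii), fix a Rybakov control measure $\mu = |x_0^* m|$ of~$m$; by~(i) and the identity $x_0^* m = x_0^* J \tilde m =: y_0^* \tilde m$ with $y_0^* = x_0^* J \in Y^*$, the measure $\mu = |y_0^* \tilde m|$ is also a Rybakov control measure of~$\tilde m$, so $L_1(\tilde m)$ and $L_1(m)$ both embed into the same space $L_1(\mu)$, and membership of a $\Sigma$-measurable $f$ in either space can be tested against this common $\mu$. Let $f \in L_1(\tilde m)$. For every $x^* \in B_{X^*}$ we have $J^* x^* \in Y^*$ with $\|J^* x^*\| \le \|J\|$, and
\[
	\int_\Omega |f| \, d|x^* m| = \int_\Omega |f| \, d|(J^* x^*)\tilde m| \le \|J\| \, \|f\|_{L_1(\tilde m)},
\]
since $x^* m = (J^* x^*)\tilde m$. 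Taking the supremum over $x^* \in B_{X^*}$ shows $f \in L_1^w(m)$ with $\|f\|_{L_1^w(m)} \le \|J\| \, \|f\|_{L_1(\tilde m)}$. To see that $f$ is actually $m$-integrable (not merely weakly so), approximate $f$ in $L_1(\tilde m)$-norm by simple functions $f_n$ (dense by the remarks in Section~\ref{section:preliminaries}); then $f_n \to f$ in $L_1^w(m)$ by the inequality just proved, and since $\int_A f_n \, dm = J(\int_A f_n \, d\tilde m) \in X$ for each $A \in \Sigma$ with $\int_A f_n \, d\tilde m \to \int_A f \, d\tilde m$ in~$Y$ (continuity of $f \mapsto \int_A f\,d\tilde m$ on $L_1(\tilde m)$), the closedness of~$X$ and continuity of~$J$ give $\int_A f \, dm = J(\int_A f \, d\tilde m) \in X$; hence $f \in L_1(m)$. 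The identity map $L_1(\tilde m) \to L_1(m)$ is therefore well-defined, linear, injective (both norms dominate the $L_1(\mu)$-norm, on which equality $m$-a.e.\ and $\tilde m$-a.e.\ coincide), and bounded by $\|J\|$ by the displayed estimate together with $\|\cdot\|_{L_1(m)} = \|\cdot\|_{L_1^w(m)}$ on $L_1(m)$.

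The main obstacle I anticipate is the $m$-integrability step in~(ii): the weak estimate comes for free from the factorization $x^* m = (J^* x^*)\tilde m$, but upgrading weak $m$-integrability to genuine $m$-integrability requires the density-of-simple-functions argument together with the closedness of the range~$X$ inside~$X^{**}$ and continuity of the vector-valued integration maps, and one must be a little careful that the approximating simple functions converge to~$f$ both in $L_1(\tilde m)$ (to land in~$Y$) and in $L_1^w(m)$ (to control the $X^{**}$-valued integrals).
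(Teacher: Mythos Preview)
Your proof is correct, and the overall structure for (i) is the same as the paper's (just more explicit). For (ii) and (iii), however, you take a genuinely different route from the paper.

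For (ii), the paper first invokes an external reference (Lemma~3.27 of Okada--Ricker--S\'{a}nchez P\'{e}rez) to assert directly that every $\tilde m$-integrable $h$ is $m$-integrable with $J(\int_\Omega h\,d\tilde m)=\int_\Omega h\,dm$, and then obtains the norm estimate via Fact~\ref{fact:norming} (the formula $\|f\|_{L_1(m)}=\sup_{g\in B_{L_\infty(m)}}\|\int_\Omega fg\,dm\|$), bounding $\|J(\int fg\,d\tilde m)\|$ by $\|J\|\,\|\int fg\,d\tilde m\|$. You instead work straight from the definition of the $L_1^w(m)$-norm: using the identity $x^*m=(J^*x^*)\tilde m$ you bound $\int|f|\,d|x^*m|$ by $\|J\|\,\|f\|_{L_1(\tilde m)}$ for each $x^*\in B_{X^*}$, and then upgrade weak $m$-integrability to genuine $m$-integrability by a hands-on simple-function approximation. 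Your approach is fully self-contained and perhaps more transparent about where the factor $\|J\|$ enters; the paper's approach is shorter but outsources the integrability step and relies on the alternative norm formula. For (iii), the paper appeals to density of ${\rm sim}\,\Sigma$ in $L_\infty(m)$ and $m=J\circ\tilde m$, whereas you give a direct scalar computation valid for every $f\in L_\infty(m)$; your argument is cleaner and avoids any limiting procedure.
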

\begin{proof} (i) The equality $\mathcal{N}(m)=\mathcal{N}(\tilde m)$
follows at once from the injectivity of~$J$. 

(ii) If $h$ is any $\tilde{m}$-integrable function, then $h$ is $m$-integrable and the equality
$J(\int_\Omega h \, d\tilde{m})=\int_\Omega h \, dm$ holds
(see, e.g., \cite[Lemma~3.27]{oka-alt}). Therefore, for each $f\in L_1(\tilde{m})$, we can apply Fact~\ref{fact:norming} twice
to get
\begin{multline*}
	\|f\|_{L_1(m)}=\sup_{g\in B_{L_\infty(m)}}
	\Big\| \int_\Omega fg \, dm \Big\| = \sup_{g\in B_{L_\infty(m)}}
	\Big\|J\Big(\int_\Omega fg \, d\tilde{m}\Big)\Big\|  \\ \leq \|J\| \sup_{g\in B_{L_\infty(m)}}
	\Big\|\int_\Omega fg \, d\tilde{m}\Big\|=\|J\| \|f\|_{L_1(\tilde{m})}.
\end{multline*} 

(iii) follows from the density of ${\rm sim}\, \Sigma$ in $L_\infty(m)$ and the equality $m=J\circ \tilde{m}$.
\end{proof}

\begin{rem}\label{rem:bddvar}
\rm In the setting of the previous lemma: 
$$
	|m|(A)\leq \|J\| |\tilde{m}|(A)
	\quad\text{for all $A\in \Sigma$.} 
$$
In particular, $m$
has finite variation whenever $\tilde{m}$ does. The converse fails in general, as we pointed out in the introduction.
\end{rem}

Combining Lemmata~\ref{lem:dense} and~\ref{lem:factorizationVM} leads to a factorization result for $I^{(\infty)}_m$ that applies 
to the DFJP factorization procedure:

\begin{cor}\label{cor:factorizationIinfty}
Suppose that $I^{(\infty)}_m$ factors as 
$$
	\xymatrix@R=3pc@C=3pc{L_\infty(m)
	\ar[r]^{I^{(\infty)}_m} \ar[d]_{T} & X\\
	Y  \ar@{->}[ur]_{J}  & \\
	}
$$
where $Y$ is a Banach space, $T$ and $J$ are operators and $J^*(X^*)$ is norm-dense in~$Y^*$.
Define $\tilde{m}:\Sigma\to Y$ 
by $\tilde{m}(A):=T(\chi_A)$ for all $A\in \Sigma$. Then:
\begin{enumerate}
\item[(i)] $\tilde{m}$ is a countably additive vector measure and $m=J\circ \tilde{m}$.
\item[(ii)] $\mathcal{N}(m)=\mathcal{N}(\tilde{m})$. 
\item[(iii)] $L_1(\tilde m)$ embeds continuously into $L_1(m)$ with norm $\leq \|J\|$.
\item[(iv)] $T=I^{(\infty)}_{\tilde{m}}$.
\end{enumerate}
\end{cor}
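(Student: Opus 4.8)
The plan is to verify the four claims in order, relying on the hypothesis that $J^*(X^*)$ is norm-dense in~$Y^*$ to control countable additivity and injectivity, and on the density of simple functions in $L_\infty(m)$ to identify $T$ with an integration operator.

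First I would establish that $\tilde m$ is a countably additive vector measure. For each $x^*\in X^*$ and each disjoint sequence $(A_n)$ in~$\Sigma$ with union~$A$, we have $(J^*x^*)(\tilde m(A_n)) = x^*(J T \chi_{A_n}) = x^*(I^{(\infty)}_m(\chi_{A_n})) = x^*(m(A_n))$, so $(J^*x^*)\circ\tilde m = x^*m$ is countably additive because $m$ is a vector measure. Thus $y^*\tilde m$ is countably additive for every $y^*$ in the norm-dense set $\Gamma := J^*(X^*) \subseteq Y^*$, and Lemma~\ref{lem:dense} gives that $\tilde m$ is a countably additive vector measure. The identity $m = J\circ\tilde m$ is immediate on sets: $J(\tilde m(A)) = J T \chi_A = I^{(\infty)}_m(\chi_A) = m(A)$.

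Next, for (ii), I would note that the density of $J^*(X^*)$ in~$Y^*$ forces $J$ to be injective: if $Jy=0$ then $y^*(y) = 0$ for all $y^*\in J^*(X^*)$, hence for all $y^*\in Y^*$ by density, so $y=0$. Now $m = J\circ\tilde m$ with $J$ injective, so Lemma~\ref{lem:factorizationVM}(i) gives $\mathcal{N}(m) = \mathcal{N}(\tilde m)$, and Lemma~\ref{lem:factorizationVM}(ii) gives (iii), namely that $L_1(\tilde m)$ embeds continuously into $L_1(m)$ with norm $\leq\|J\|$. Finally, for (iv): both $T$ and $I^{(\infty)}_{\tilde m}$ are operators on $L_\infty(m)$ (the latter makes sense because $L_\infty(m) = L_\infty(\tilde m)$ by (ii), since $\mathcal{N}(m)=\mathcal{N}(\tilde m)$), they are continuous, and they agree on simple functions: $T(\chi_A) = \tilde m(A) = I^{(\infty)}_{\tilde m}(\chi_A)$. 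Since ${\rm sim}\,\Sigma$ is norm-dense in $L_\infty(m)$, we conclude $T = I^{(\infty)}_{\tilde m}$.

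I do not expect a serious obstacle here: the corollary is a bookkeeping assembly of Lemmata~\ref{lem:dense} and~\ref{lem:factorizationVM} together with routine density arguments. The one point deserving a word of care is the passage from "$y^*\tilde m$ countably additive for $y^*$ in a norm-dense set" to "$\tilde m$ a vector measure" — this is exactly what Lemma~\ref{lem:dense} supplies, so it should be invoked explicitly rather than reproved. A second minor point is making sure $I^{(\infty)}_{\tilde m}$ is well-defined on $L_\infty(m)$ before asserting (iv); this hinges on the equality of $m$-null and $\tilde m$-null sets established in (ii), which is why (ii) should be proved before (iv).
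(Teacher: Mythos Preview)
Your proof is correct and follows essentially the same route as the paper: invoke Lemma~\ref{lem:dense} with $\Gamma=J^*(X^*)$ for~(i), deduce (ii) and~(iii) from Lemma~\ref{lem:factorizationVM} after noting $J$ is injective, and obtain~(iv) by density of simple functions. Your write-up is actually slightly more careful than the paper's, since you make explicit why $J$ is injective and why $I^{(\infty)}_{\tilde m}$ is well-defined on $L_\infty(m)$.
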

\begin{proof}
(i) follows from Lemma~\ref{lem:dense} applied to $\Gamma:=J^*(X^*)$ and the countable additivity of~$m$.
(ii) and (iii) follow from Lemma~\ref{lem:factorizationVM}, since $J$ is injective.
Finally, (iii) is a consequence of the continuity of both $T$ and $I^{(\infty)}_{\tilde{m}}$, the density of ${\rm sim}\, \Sigma$ in~$L_\infty(m)$, 
and the fact that $\int_\Omega h \, d\tilde{m}=T(h)$ for every $h\in {\rm sim}\, \Sigma$.
\end{proof}

An isometric version of our next result
was proved in \cite[Lemma~6]{lip}. We include a similar proof for the sake of completeness.

\begin{lem}\label{lem:Lipecki}
Let $Y$ be a Banach space and $\tilde{m}:\Sigma\to Y$ be a countably additive vector measure
with $\mathcal{N}(m)=\mathcal{N}(\tilde{m})$. Suppose that there is a constant $D>0$ such that 
$\|f\|_{L_1(m)} \leq D \|f\|_{L_1(\tilde{m})}$ for every $f\in {\rm sim}\, \Sigma$.
Then $L_1(\tilde{m})$ embeds continuously into $L_1(m)$ with norm $\leq D$.
\end{lem}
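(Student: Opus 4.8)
The plan is to reduce everything to scalar $L_1$-spaces via Rybakov control measures and then extend the given inequality from simple functions to all of $L_1(\tilde{m})$ by a density and almost-everywhere convergence argument. First I would fix a Rybakov control measure $\mu$ of $m$; since $\mathcal{N}(m)=\mathcal{N}(\tilde m)$, the measure $\mu$ is also a control measure for $\tilde m$ (indeed $\mathcal{N}(\mu)\sub\mathcal{N}(m)=\mathcal{N}(\tilde m)$), so ``$m$-a.e.'' and ``$\tilde m$-a.e.'' both coincide with ``$\mu$-a.e.''. Recall that $L_1^w(\tilde m)$, hence its closed sublattice $L_1(\tilde m)$, embeds continuously into $L_1(\mu)$, and likewise for $L_1(m)$; this is what makes Fact~\ref{fact:convergence} available for both measures.

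Next I would take an arbitrary $f\in L_1(\tilde{m})$ and, using the density of ${\rm sim}\,\Sigma$ in $L_1(\tilde m)$, choose a sequence $(f_n)$ in ${\rm sim}\,\Sigma$ with $f_n\to f$ in $L_1(\tilde m)$-norm. By the hypothesis, $\|f_n-f_k\|_{L_1(m)}\leq D\|f_n-f_k\|_{L_1(\tilde m)}$ for all $n,k$, so $(f_n)$ is Cauchy in $L_1(m)$ and converges there to some $g\in L_1(m)$, with $\|g\|_{L_1(m)}=\lim_n\|f_n\|_{L_1(m)}\leq D\lim_n\|f_n\|_{L_1(\tilde m)}=D\|f\|_{L_1(\tilde m)}$. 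It remains to identify $g$ with $f$ (as elements of $L_1(m)$, i.e., $\mu$-a.e.). Here I apply Fact~\ref{fact:convergence} twice: passing to a subsequence, $f_n\to f$ $\tilde m$-a.e., i.e. $\mu$-a.e.; passing to a further subsequence, $f_n\to g$ $m$-a.e., i.e. $\mu$-a.e.; hence $f=g$ $\mu$-a.e., so $f\in L_1(m)$ with $\|f\|_{L_1(m)}\leq D\|f\|_{L_1(\tilde m)}$. This shows the identity map $L_1(\tilde m)\to L_1(m)$ is well defined with norm $\leq D$; injectivity is automatic since both spaces are identified modulo $\mu$-null sets.

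The only delicate point is the bookkeeping around the three notions of ``almost everywhere'' — making sure that the equality $\mathcal{N}(m)=\mathcal{N}(\tilde m)$ really lets one run Fact~\ref{fact:convergence} on the same Rybakov measure for both spaces, so that the two subsequential a.e.-limits can legitimately be compared. Once that is in place, the argument is a routine completion-of-a-dense-subspace argument. (An alternative, avoiding the explicit passage through $L_1(\mu)$, would be to invoke Fact~\ref{fact:norming} and pass to the limit inside $\|\int_\Omega f_n g\,dm\|$ for each $g\in B_{L_\infty(m)}$, but one still needs an a.e.-convergent subsequence to identify the limit function, so the structure of the proof is the same.)
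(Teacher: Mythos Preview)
Your proof is correct and follows essentially the same route as the paper's: extend the inequality from ${\rm sim}\,\Sigma$ to all of $L_1(\tilde m)$ by density, obtain a limit in $L_1(m)$ via the Cauchy property, and identify the two limits using Fact~\ref{fact:convergence} twice together with $\mathcal{N}(m)=\mathcal{N}(\tilde m)$. The only cosmetic difference is that the paper packages the first step as extending the identity operator $i:{\rm sim}\,\Sigma\to L_1(m)$ to $j:L_1(\tilde m)\to L_1(m)$ and then proves $j(f)=f$, while you run the Cauchy argument by hand; your explicit bookkeeping via a common control measure $\mu$ is a helpful clarification (though note $\mu$ need not be a \emph{Rybakov} control measure for~$\tilde m$, so strictly speaking invoke Fact~\ref{fact:convergence} for $\tilde m$ with its own Rybakov measure and then use $\mathcal{N}(m)=\mathcal{N}(\tilde m)$ to compare, exactly as you already indicate in your ``delicate point'').
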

\begin{proof}
Consider ${\rm sim}\, \Sigma$ as a linear subspace of $L_1(\tilde{m})$.
By the assumptions, the identity map $i: {\rm sim} \, \Sigma \to L_1(m)$ is well-defined, linear and 
continuous, with norm $\|i\| \leq D$. Since ${\rm sim}\, \Sigma$ is
dense in $L_1(m)$, we can extend $i$ to an operator 
$$
	j: L_1(\tilde{m}) \to L_1(m)
$$
with $\|j\|=\|i\| \leq D$. We claim that $j(f)=f$ for every $f\in L_1(\tilde{m})$. 
Indeed, choose a sequence $(f_n)$ in ${\rm sim}\, \Sigma$ 
such that
$\|f_n- f\|_{L_1(\tilde{m})}\to 0$. By passing to a subsequence, not relabeled, we can assume that $f_n\to f$ $\tilde{m}$-a.e.
(Fact~\ref{fact:convergence}). Since $j$ is an operator and $j(f_n)=f_n$ for all $n\in \N$, we have $\|f_n- j(f)\|_{L_1(m)}\to 0$. Another appeal to Fact~\ref{fact:convergence}
allows us to extract a further subsequence $(f_{n_k})$ such that $f_{n_k} \to j(f)$ $m$-a.e. It follows that $j(f)=f$.     
\end{proof}

The following result first appeared as \cite[Lemma~2.2]{oka-ric} (with a different and simpler proof in \cite[Lemma~3.1]{rod15}), 
but under the extra assumption that $Y$ contains no isomorphic copy of $\ell_\infty$ (in order to prove the first part of (iii) below). 
In addition to removing this unnecessary condition, we also provide explicit estimates for the norm of the identity as a Banach lattice isomorphism between 
$L_1(m)$ and~$L_1(\tilde{m})$. 

\begin{theo}\label{theo:OR}
Suppose that $I_m$ factors as 
$$
	\xymatrix@R=3pc@C=3pc{L_1(m)
	\ar[r]^{I_m} \ar[d]_{T} & X\\
	Y  \ar@{->}[ur]_{J}  & \\
	}
$$
where $Y$ is a Banach space, $T$ and $J$ are operators and $J$ is injective. 
Define $\tilde{m}:\Sigma\to Y$ 
by $\tilde{m}(A):=T(\chi_A)$ for all $A\in \Sigma$. 
Then:
\begin{enumerate}
\item[(i)] $\tilde{m}$ is a countably additive vector measure and $m=J\circ \tilde{m}$.
\item[(ii)] $\mathcal{N}(m)=\mathcal{N}(\tilde{m})$.
\item[(iii)] $L_1(\tilde{m})=L_1(m)$ with equivalent norms. In fact, we have 
$$
	\|T\|^{-1} \|f\|_{L_1(\tilde{m})}  \leq \|f\|_{L_1(m)} \leq \|J\|\|f\|_{L_1(\tilde{m})}
	\quad\text{for every $f\in L_1(m)$.}
$$
\item[(iv)] $T=I_{\tilde{m}}$.
\end{enumerate}
\end{theo}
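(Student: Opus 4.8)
The plan is to follow the same template as Corollary~\ref{cor:factorizationIinfty}, but now working over $L_1(m)$ instead of $L_\infty(m)$, and paying attention to both directions of the norm comparison in (iii). First I would establish (i). Since $J$ is injective, and $m(A) = I_m(\chi_A) = J(T(\chi_A)) = J(\tilde m(A))$ for every $A\in\Sigma$, the relation $m = J\circ\tilde m$ is immediate. To see that $\tilde m$ is a countably additive vector measure, I would invoke Lemma~\ref{lem:dense}: for each $y^*\in J^*(X^*)$, say $y^* = J^*(x^*)$, we have $y^*\tilde m = x^* J \tilde m = x^* m$, which is countably additive because $m$ is; and $J^*(X^*)$ separates the points of~$Y$ (indeed it is norm-dense in $Y^*$ by injectivity of $J$, though separation alone suffices for Lemma~\ref{lem:dense}). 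Hence $\tilde m$ is a countably additive vector measure. Then (ii) follows from Lemma~\ref{lem:factorizationVM}(i), and part (iv), $T = I_{\tilde m}$, follows from the density of ${\rm sim}\,\Sigma$ in $L_1(m)$, the continuity of both $T$ and $I_{\tilde m}$ on $L_1(m)$ (the latter mapping into $L_1(\tilde m) \subseteq L_1(m)$), and the fact that they agree on simple functions by the very definition of~$\tilde m$.

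The substantive part is (iii). For the upper estimate $\|f\|_{L_1(m)} \leq \|J\|\,\|f\|_{L_1(\tilde m)}$ on $L_1(\tilde m)$, this is exactly Lemma~\ref{lem:factorizationVM}(ii) applied to the factorization $m = J\circ\tilde m$; in particular every $\tilde m$-integrable $f$ is $m$-integrable, so $L_1(\tilde m)\sub L_1(m)$ as sets. For the lower estimate, the key point is to go the other way: I want to show every $f\in L_1(m)$ is $\tilde m$-integrable with $\|f\|_{L_1(\tilde m)} \leq \|T\|\,\|f\|_{L_1(m)}$, which combined with the previous inclusion gives $L_1(m) = L_1(\tilde m)$ as sets with the stated two-sided norm bounds. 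By Lemma~\ref{lem:Lipecki} (with $D = \|T\|$ and the roles of $m$ and $\tilde m$ swapped, using (ii)), it is enough to verify that $\|f\|_{L_1(\tilde m)} \leq \|T\|\,\|f\|_{L_1(m)}$ for every $f\in{\rm sim}\,\Sigma$. For such an $f$, one computes using Fact~\ref{fact:norming} for $\tilde m$ together with the identity $\int_\Omega fg\,d\tilde m = T(fg)$ valid for $f,g\in{\rm sim}\,\Sigma$ (again by definition of~$\tilde m$ and linearity): since $fg\in{\rm sim}\,\Sigma\sub L_1(m)$ and $\|fg\|_{L_1(m)} \leq \|f\|_{L_1(m)}\|g\|_{L_\infty(m)}$ by property~(i) of the multiplication, we get
$$
	\|f\|_{L_1(\tilde m)} = \sup_{g\in B_{L_\infty(m)}\cap\,{\rm sim}\,\Sigma}\Big\|\int_\Omega fg\,d\tilde m\Big\| = \sup_{g}\|T(fg)\| \leq \|T\|\sup_g\|fg\|_{L_1(m)} \leq \|T\|\,\|f\|_{L_1(m)}.
$$
Note that $L_\infty(m) = L_\infty(\tilde m)$ by (ii), so the supremum formula from Fact~\ref{fact:norming} applied to $\tilde m$ legitimately ranges over $B_{L_\infty(m)}\cap{\rm sim}\,\Sigma$. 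An application of Lemma~\ref{lem:Lipecki} then upgrades this simple-function estimate to the continuous embedding $L_1(m)\hookrightarrow L_1(\tilde m)$ with norm $\leq\|T\|$, and together with the reverse inclusion we conclude $L_1(\tilde m) = L_1(m)$ with the displayed norm inequalities; the identity map is a Banach lattice isomorphism because both spaces carry the $m$-a.e.\ (equivalently $\tilde m$-a.e.) order.

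The main obstacle I anticipate is purely a matter of bookkeeping rather than depth: one must be careful that the two applications of Fact~\ref{fact:norming} and of Lemma~\ref{lem:Lipecki} are set up with $m$ and $\tilde m$ in the correct roles, and that the identity $\int_\Omega h\,d\tilde m = T(h)$ — which holds on simple functions by construction — is only used where $h$ is simple (for the lower bound) or where the general integration-operator identity $J(\int h\,d\tilde m) = \int h\,dm$ from \cite[Lemma~3.27]{oka-alt} is invoked (for the upper bound). The removal of the ``no copy of $\ell_\infty$'' hypothesis of \cite{oka-ric} is exactly what Lemma~\ref{lem:dense} buys us: countable additivity of $\tilde m$ is obtained from a norming subset $J^*(X^*)$ of $Y^*$ without any structural restriction on~$Y$.
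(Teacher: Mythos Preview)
Your argument for parts (ii)--(iv) follows the paper's route closely: the upper bound via Lemma~\ref{lem:factorizationVM}, the lower bound on simple functions via Fact~\ref{fact:norming} and the multiplication inequality, then Lemma~\ref{lem:Lipecki} to extend. One organizational quibble: you invoke (iv) before (iii), but the continuity of $I_{\tilde m}$ as a map on $L_1(m)$ only makes sense once you know $L_1(m)\subseteq L_1(\tilde m)$ with a norm bound, which is (iii). The paper proves (iv) last for this reason.

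The genuine gap is in (i). You assert that injectivity of $J$ makes $J^*(X^*)$ norm-dense in~$Y^*$, and also that ``separation alone suffices for Lemma~\ref{lem:dense}''. Both claims are false. Injectivity of $J$ gives only that $J^*(X^*)$ is weak$^*$-dense in~$Y^*$; norm-density is equivalent to injectivity of~$J^{**}$, which is not assumed here. (For a concrete failure, take $J$ to be the inclusion $\ell_1\hookrightarrow c_0$: then $J^*(c_0^*)=\ell_1\subset\ell_\infty=\ell_1^*$, whose norm closure is $c_0$, not~$\ell_\infty$.) And Lemma~\ref{lem:dense} genuinely needs norm-density (or at least a Rainwater set, per Remark~\ref{rem:Rainwater}); mere separation yields finite additivity and bounded range via Dieudonn\'{e}--Grothendieck, but the approximation step $|y^*(\nu(A))-y_0^*(\nu(A))|\leq\varepsilon$ in the proof requires norm approximation of $y^*$ by elements of~$\Gamma$.

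The paper sidesteps Lemma~\ref{lem:dense} entirely for this theorem and proves~(i) directly: $\tilde m$ is obviously finitely additive, and the bound
\[
\|\tilde m(A)\|=\|T(\chi_A)\|\leq \|T\|\,\|\chi_A\|_{L_1(m)}=\|T\|\,\|m\|(A)
\]
shows that $\tilde m$ inherits countable additivity from the continuity of the semivariation~$\|m\|$ (equivalently, $\sum_n\chi_{A_n}\to\chi_{\bigcup_n A_n}$ in $L_1(m)$ for disjoint $(A_n)$, and $T$ is continuous). This is exactly the point where factoring through $L_1(m)$ rather than $L_\infty(m)$ pays off: the characteristic functions are norm-continuous in $L_1(m)$, so no dual-space hypothesis on~$J$ is needed, and this is precisely what allows the ``no copy of $\ell_\infty$'' assumption from \cite{oka-ric} to be dropped.
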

\begin{proof} (i) Clearly, $\tilde{m}$ is finitely additive. Now, its countable additivity
follows from that of~$m$ and the inequality $\|\tilde{m}(A)\| \leq \|T\| \|\chi_A\|_{L_1(m)}=\|T\| \|m\|(A)$, for all $A\in \Sigma$.
The equality $m=J\circ \tilde{m}$ is obvious.

Lemma~\ref{lem:factorizationVM} implies (ii) and the fact that any $f\in L_1(\tilde{m})$ belongs to~$L_1(m)$, with
$\|f\|_{L_1(m)}\leq \|J\|\|f\|_{L_1(\tilde{m})}$.

On the other hand, observe that for any $h \in {\rm sim}\,\Sigma$ we have $\int_\Omega h \, d\tilde{m}=T(h)$. 
Now, given any $f \in {\rm sim}\,\Sigma$, an appeal to Fact~\ref{fact:norming} yields
\begin{multline*}
	\|f\|_{L_1(\tilde{m})}=\sup_{g\in B_{L_\infty(m)}\cap {\rm sim}\,\Sigma}\Big\|\int_\Omega fg \, d\tilde{m}\Big\|
	=\sup_{g\in B_{L_\infty(m)}\cap {\rm sim}\,\Sigma}\|T(fg)\| \\ \leq \|T\| 
	\sup_{g\in B_{L_\infty(m)}\cap {\rm sim}\,\Sigma}\|fg\|_{L_1(m)} \leq \|T\| \|f\|_{L_1(m)}.
\end{multline*} 

It follows that for every $f\in {\rm sim}\, \Sigma$ we have
\begin{equation}\label{eqn:simple}
	\|T\|^{-1} \|f\|_{L_1(\tilde{m})}  \leq \|f\|_{L_1(m)} \leq \|J\|\|f\|_{L_1(\tilde{m})}.
\end{equation}
We can now apply Lemma~\ref{lem:Lipecki} twice to conclude that $L_1(m)=L_1(\tilde{m})$ 
and that \eqref{eqn:simple} holds for every $f\in L_1(m)$.

Finally, (iv) follows from the continuity of both $T$ and $I_{\tilde{m}}$, the density of ${\rm sim}\,\Sigma$ in~$L_1(\tilde{m})$, 
and the fact that $\int_\Omega h \, d\tilde{m}=T(h)$ for every $h\in {\rm sim}\,\Sigma$.
\end{proof}

\subsection{An observation on vector measures with a Bochner derivative with respect to its variation}

It is known that $m$ has finite variation and a Bochner derivative with respect to~$|m|$ if and only if
$I^{(\infty)}_m$ is nuclear (see, e.g., \cite[p.~173, Theorem~4]{die-uhl-J}). Recall that an operator $T$ from a Banach space~$Z$ to~$X$ is said to be {\em nuclear}
if there exist sequences $(z^*_n)$ in $Z^*$ and $(x_n)$ in~$X$ with $\sum_{n\in \N}\|z^*_n\|\|x_n\|<\infty$
such that $T(z)=\sum_{n\in \N}z^*_n(z) x_n$ for all $z\in Z$.

\begin{theo}\label{theo:RP}
If $m$ has finite variation and a Bochner derivative with respect to~$|m|$, then there exist
a separable reflexive Banach space~$Y$, a countably additive vector measure $\tilde{m}:\Sigma\to Y$ having finite variation and an injective compact operator $J: Y \to X$
such that $m=J\circ \tilde{m}$.
\end{theo}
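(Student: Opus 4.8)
The plan is to exploit the characterization mentioned just before the statement: $m$ has finite variation and a Bochner derivative with respect to $|m|$ if and only if $I^{(\infty)}_m$ is nuclear. So I would start by writing $I^{(\infty)}_m(f) = \sum_{n} x^*_n(f)\, x_n$ for suitable sequences $(x^*_n)$ in $L_\infty(m)^*$ and $(x_n)$ in $X$ with $\sum_n \|x^*_n\|\,\|x_n\| < \infty$. The key feature of nuclear operators is that they factor through a nuclear diagonal operator: up to rescaling one may assume $\|x^*_n\| \to 0$ and $\|x_n\| \to 0$ with $\sum_n \|x^*_n\|\,\|x_n\| < \infty$, and then $I^{(\infty)}_m$ factors as $L_\infty(m) \xrightarrow{S} \ell_2 \xrightarrow{D} \ell_2 \xrightarrow{R} X$, where $S(f) = (a_n x^*_n(f))_n$, $D$ is multiplication by a sequence tending to $0$ (so $D$ is compact), and $R((t_n)) = \sum_n b_n t_n x_n$, with $a_n b_n = 1$ chosen so that $\sum a_n^2 \|x^*_n\|^2 < \infty$ and $\sum b_n^2 \|x_n\|^2 < \infty$ (this is the standard trick: pick $a_n = \|x^*_n\|^{-1/2}\|x_n\|^{1/2}$, say, when the factors are nonzero). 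The composite $J_0 := R \circ D : \ell_2 \to X$ is then a compact operator, and $\ell_2$ is separable and reflexive.

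Next I would massage this into the exact form demanded by the statement, namely a factorization of the vector measure itself through a vector measure of finite variation into a separable reflexive space via an \emph{injective compact} operator. Applying the DFJP-LNO procedure (Section~\ref{subsection:dfjp}) to the compact operator $J_0 : \ell_2 \to X$ — or more directly, noting that $J_0$ compact already gives, by taking $Y_0 := \overline{\operatorname{range}}\, J_0$ or by the DFJP construction with $K := \frac{1}{\|J_0\|}\overline{J_0(B_{\ell_2})}$, which is compact hence weakly compact — produces a separable reflexive space $Y$ (separable by property~(x), reflexive by property~(viii) since $K$ is weakly compact) and an injective compact operator $J : Y \to X$ with $J_0 = J \circ (\text{something})$, so that in any case $I^{(\infty)}_m = J \circ T$ for an operator $T : L_\infty(m) \to Y$ whose range lies in a separable reflexive space and with $J$ injective compact. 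Define $\tilde m : \Sigma \to Y$ by $\tilde m(A) := T(\chi_A)$. One checks $J \circ \tilde m = m$ on $\Sigma$, and since $J$ is injective while $J^{**}$ is injective as well (property~(iv)), $\tilde m$ is a countably additive vector measure — either invoke Corollary~\ref{cor:factorizationIinfty}(i) if the norm-density hypothesis on $J^*(X^*)$ holds (which it does, by~(iv)), or note $Y$ reflexive contains no copy of $\ell_\infty$ and use Remark~\ref{rem:Rainwater}(ii) with Lemma~\ref{lem:dense}.

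The one genuinely new thing to verify is that $\tilde m$ has \emph{finite variation}; the other properties are bookkeeping. For this I would argue that $I^{(\infty)}_{\tilde m} = T$ is again nuclear: if $I^{(\infty)}_m = J \circ T$ is nuclear and $J$ is (merely) bounded and injective with dense-range adjoint, nuclearity does not automatically pass down to $T$, so instead I would run the nuclear factorization the other way round. Since $Y$ is reflexive (hence has the Radon–Nikod\'ym property) and $\tilde m$ is $|m|$-continuous (by $\mathcal N(m) = \mathcal N(\tilde m)$, Lemma~\ref{lem:factorizationVM}(i)), $\tilde m$ has a Bochner derivative $g : \Omega \to Y$ with respect to $|m|$; then for $A \in \Sigma$, $|\tilde m|(A) = \int_A \|g\|\, d|m|$, so it suffices to show $\|g\| \in L_1(|m|)$. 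This follows because $J \circ g$ is a Bochner derivative of $m = J \circ \tilde m$ with respect to $|m|$, and by compactness/injectivity of $J$ one transfers integrability: more cleanly, build the nuclear representation so that $T$ itself is visibly nuclear by choosing the factorization of $I^{(\infty)}_m$ to pass through $Y = \ell_2$ in the first place, i.e.\ take $Y := \ell_2$ with $T := D \circ S$ (compact, and in fact $T$ inherits a nuclear representation $T(f) = \sum_n x^*_n(f)\, (c_n e_n)$ with $\sum \|x^*_n\|\,|c_n| < \infty$) and $J := R$ made injective by a further DFJP step. Then $I^{(\infty)}_{\tilde m} = T$ is nuclear, so by the cited characterization $\tilde m$ has finite variation and a Bochner derivative with respect to $|\tilde m|$. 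I expect the main obstacle to be precisely this juggling of the nuclear factorization so that \emph{both} $J$ is injective compact into a separable reflexive space \emph{and} the co-factor $T$ stays nuclear (equivalently, $\tilde m$ keeps finite variation) — reconciling the DFJP-LNO "injectivity" surgery on $J$ with the preservation of nuclearity of what remains.
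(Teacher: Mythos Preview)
Your overall strategy---factor the nuclear operator $I^{(\infty)}_m$, apply DFJP to the outer compact piece, then invoke Corollary~\ref{cor:factorizationIinfty} and the equivalence ``$I^{(\infty)}_{\tilde m}$ nuclear $\Leftrightarrow$ $\tilde m$ has finite variation''---is exactly the paper's approach. Two points need tightening.

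First, the RNP argument you try midway is circular: the Radon--Nikod\'ym property of~$Y$ yields a Bochner derivative for~$\tilde m$ with respect to~$|m|$ only once you already know $\tilde m$ has \emph{finite variation} (RNP applies to measures of bounded variation). You notice this and switch tactics, which is right; but the first argument should simply be dropped.

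Second, your $\ell_2$ factorization $I^{(\infty)}_m = R\circ D\circ S$ is slightly garbled (with $a_nb_n=1$ there is no room for a nontrivial diagonal~$D$), and after ``making $J:=R$ injective by a further DFJP step'' the co-factor changes, so you must recheck nuclearity. The paper avoids this juggling by using the standard decomposition of a nuclear operator as $V\circ U$ with $U:L_\infty(m)\to\ell_1$ \emph{nuclear} and $V:\ell_1\to X$ \emph{compact} (see, e.g., \cite[Proposition~5.23]{die-alt}). One then applies DFJP to~$V$ alone: $V=J\circ T$ with $J:Y\to X$ injective compact and $Y$ separable reflexive (properties~(viii) and~(x)). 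The co-factor becomes $T\circ U$, which is nuclear simply because $U$ is nuclear and $T$ is bounded; hence $I^{(\infty)}_{\tilde m}=T\circ U$ is nuclear and $\tilde m$ has finite variation. Placing the nuclear factor \emph{first} and DFJP-ing only the compact tail is precisely the move that dissolves the obstacle you flag in your last sentence.
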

\begin{proof}
Since $I_m^{(\infty)}$ is a nuclear operator, it can be factored as
$I_m^{(\infty)}=V\circ U$, where $U:L_\infty(m)\to \ell_1$ is a nuclear operator and $V:\ell_1\to X$ is a compact operator (see, e.g., \cite[Proposition~5.23]{die-alt}). 
Now, we can consider the DFJP factorization of~$V$ to obtain the commutative diagram
$$
	\xymatrix@R=3pc@C=3pc{L_\infty(m)
	\ar[r]^{I^{(\infty)}_m} \ar[d]_{U} & X\\
	\ell_1  \ar[r]^{T} \ar@{->}[ur]_{V}  & Y \ar[u]_{J} \\
	}
$$
where $Y$ is a separable reflexive Banach space, $T$ and $J$ are compact operators
and $J^*(X^*)$ is norm-dense in~$Y^*$. 
By Corollary~\ref{cor:factorizationIinfty}, the map $\tilde{m}:\Sigma\to Y$ 
defined by $\tilde{m}(A):=(T\circ U)(\chi_A)$ for all $A\in \Sigma$ is a countably additive vector measure
such that $\mathcal{N}(m)=\mathcal{N}(\tilde m)$, $m=J\circ \tilde m$ and $I_{\tilde{m}}^{(\infty)}=T\circ U$. Since $U$ is nuclear, the same holds for $I_{\tilde{m}}^{(\infty)}$,
and so $\tilde{m}$ has finite variation.
\end{proof}

\section{DFJP-LNO factorization of integration operators}\label{section:Iinfty}

The following theorem collects some consequences of the DFJP-LNO factorization 
when applied to~$I_m^{(\infty)}$.

\begin{theo}\label{theo:Iinfty}
Let us consider the DFJP-LNO factorization of~$I^{(\infty)}_m$, as follows
$$
	\xymatrix@R=3pc@C=3pc{L_\infty(m)
	\ar[r]^{I^{(\infty)}_m} \ar[d]_{T} & X\\
	Y  \ar@{->}[ur]_{J}  & \\
	}
$$
Let $\tilde{m}:\Sigma \to Y$ be the countably additive vector measure defined
by $\tilde{m}(A):=T(\chi_A)$ for all $A\in \Sigma$ (see Corollary~\ref{cor:factorizationIinfty}). Then:
\begin{enumerate}
\item[(i)] $Y$ is reflexive.
\item[(ii)] $L_1(\tilde m)$ embeds continuously into $L_1(m)$ with norm $\leq 1$.
\item[(iii)] $\|\tilde{m}\|(\Omega)=\|m\|(\Omega)$.
\item[(iv)] There is a universal constant $C>0$ such that
\begin{equation}\label{eqn:21infty}
	\|I^{(\infty)}_{\tilde{m}}(f)\|^2 \leq C \|m\|(\Omega) \|f\|_{L_\infty(m)} \|I^{(\infty)}_m(f)\|
	\quad\text{for every $f\in L_\infty(m)$.}
\end{equation}
In particular, $\|\tilde{m}(A)\|^2 \leq C \|m\|(\Omega) \|m(A)\|$ for all $A\in \Sigma$.
\end{enumerate}
\end{theo}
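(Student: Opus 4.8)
The plan is to extract each item directly from the properties of the DFJP-LNO construction recalled in Subsection~\ref{subsection:dfjp}, together with Corollary~\ref{cor:factorizationIinfty}. First, item~(i): since $I^{(\infty)}_m$ is weakly compact (recall $\|I^{(\infty)}_m\|=\|m\|(\Omega)$ and weak compactness was noted in the preliminaries), property~(ix) of the DFJP-LNO procedure applied with $K=\frac{1}{\|m\|(\Omega)}\overline{I^{(\infty)}_m(B_{L_\infty(m)})}$ gives that $Y=X_K$ is reflexive. Item~(ii) is then immediate from Corollary~\ref{cor:factorizationIinfty}(iii), because in the DFJP-LNO normalization one has $\|J\|=\|J_K\|=1$; thus the identity map $L_1(\tilde m)\to L_1(m)$ has norm $\le 1$.

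For item~(iii), one inequality is free: applying the identity $\alpha_\infty:L_\infty(m)\to L_1(m)$ and the analogous inclusion for $\tilde m$, together with (ii), one checks that $\|m\|(A)=\|\chi_A\|_{L_1(m)}\le \|\chi_A\|_{L_1(\tilde m)}=\|\tilde m\|(A)$ for all $A$, so $\|\tilde m\|(\Omega)\ge\|m\|(\Omega)$. For the reverse inequality I would use that $I^{(\infty)}_{\tilde m}=T$ (Corollary~\ref{cor:factorizationIinfty}(iv)) has norm $\|T\|=\|T_K\|=\|I^{(\infty)}_m\|=\|m\|(\Omega)$ in the DFJP-LNO normalization, and that $\|I^{(\infty)}_{\tilde m}\|=\|\tilde m\|(\Omega)$ by the general formula for the norm of an integration operator on $L_\infty$. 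Hence $\|\tilde m\|(\Omega)=\|T\|=\|m\|(\Omega)$, giving equality. (One must be slightly careful: the factorization in Corollary~\ref{cor:factorizationIinfty} is stated for $K=\frac{1}{\|I^{(\infty)}_m\|}\overline{I^{(\infty)}_m(B_{L_\infty(m)})}$, so I would make explicit at the outset that we are in exactly that situation, with $a=\bar a$.)

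Item~(iv) is the crux and the only place where the quantitative strength of DFJP-LNO is really used. For $f\in L_\infty(m)$ write $x:=I^{(\infty)}_m(f)$. If $f=0$ there is nothing to prove; otherwise set $x_0:=\frac{x}{\|m\|(\Omega)\,\|f\|_{L_\infty(m)}}$, which lies in $K$ by definition of $K$ as the closed absolutely convex hull of $\frac{1}{\|m\|(\Omega)}I^{(\infty)}_m(B_{L_\infty(m)})$ (up to the harmless scaling $f/\|f\|_{L_\infty(m)}\in B_{L_\infty(m)}$). The key estimate~\eqref{eqn:power2}, i.e.\ $\|x_0\|_K^2\le\bigl(\frac14+\frac{1}{2\ln a}\bigr)\|x_0\|$ for $x_0\in K$, applied with $a=\bar a$, together with $J_K\circ I^{(\infty)}_{\tilde m}=I^{(\infty)}_m$ and the identification $I^{(\infty)}_{\tilde m}(f)=T(f)$ (note $T$ maps into $X_K=Y$), yields after unravelling the scalars
\begin{equation*}
	\|I^{(\infty)}_{\tilde m}(f)\|_Y^2 = \|m\|(\Omega)^2\|f\|_{L_\infty(m)}^2\,\|x_0\|_K^2
	\le C\,\|m\|(\Omega)^2\|f\|_{L_\infty(m)}^2\,\|x_0\|
	= C\,\|m\|(\Omega)\,\|f\|_{L_\infty(m)}\,\|I^{(\infty)}_m(f)\|,
\end{equation*}
with $C:=\frac14+\frac{1}{2\ln\bar a}$, which is a universal constant since $\bar a$ is. The specialization to $f=\chi_A$ gives $\|\tilde m(A)\|^2=\|T(\chi_A)\|^2=\|I^{(\infty)}_{\tilde m}(\chi_A)\|^2\le C\,\|m\|(\Omega)\,\|\chi_A\|_{L_\infty(m)}\,\|m(A)\|\le C\,\|m\|(\Omega)\,\|m(A)\|$, using $\|\chi_A\|_{L_\infty(m)}\le 1$. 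The main obstacle is purely bookkeeping: making sure that $x_0\in K$ exactly (the passage through the closure and the absolutely convex hull is immediate, but the normalizing constants must be tracked carefully) and that one is entitled to read $\|I^{(\infty)}_{\tilde m}(f)\|_Y=\|T(f)\|_{X_K}$ as $\|x_0\|_K$ up to scaling; once these identifications are in place, \eqref{eqn:power2} does all the work.
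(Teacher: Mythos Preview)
Your proposal is correct and follows essentially the same approach as the paper: both identify $K=\frac{1}{\|m\|(\Omega)}\overline{I^{(\infty)}_m(B_{L_\infty(m)})}$, deduce (i) from weak compactness of~$K$, get (ii)--(iii) from Corollary~\ref{cor:factorizationIinfty} together with $\|J\|=1$ and $\|T\|=\|I^{(\infty)}_{\tilde m}\|=\|\tilde m\|(\Omega)=\|m\|(\Omega)$, and obtain (iv) by applying~\eqref{eqn:power2} to the normalized element of~$K$. Your version simply spells out the scaling in (iv) in more detail than the paper, which just records the resulting inequality for $g\in B_{L_\infty(m)}$ and leaves the homogeneity step implicit.
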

\begin{proof}
The DFJP-LNO factorization is done by using the set
$$
	K:=\frac{1}{\|m\|(\Omega)}\overline{I^{(\infty)}_m(B_{L_\infty(m)})},
$$
so that $T=T_K$, $J=J_K$, $\|T\|=\|I_m^{(\infty)}\|=\|m\|(\Omega)$ and $\|J\|=1$. Since $K$ is weakly compact, $Y$ is reflexive.
Statements~(ii) and~(iii) follow from Corollary~\ref{cor:factorizationIinfty}, bearing in mind that
$\|I_{\tilde{m}}^{(\infty)}\|=\|\tilde{m}\|(\Omega)$. Finally, (iv) follows from inequality~\eqref{eqn:power2}, which 
implies $\|I^{(\infty)}_{\tilde{m}}(g)\|^2 \leq C \|m\|(\Omega) \|I^{(\infty)}_m(g)\|$
for every $g\in B_{L_\infty(m)}$. 
\end{proof}

\begin{rem}\label{rem:Iinfty}
\rm In the setting of Theorem~\ref{theo:Im}:
\begin{enumerate}
\item[(i)] $|m|(A)\leq |\tilde{m}|(A)$ for all $A\in \Sigma$ (because $\|J\|=1$).
\item[(ii)] $\mathcal{R}(m)$ is relatively norm-compact if and only if $I^{(\infty)}_m$ is compact. In this case,
$\mathcal{R}(\tilde{m})$ is relatively norm-compact
(because the restriction $J|_K$ is a norm-to-norm homeomorphism and $\mathcal{R}(m) \sub \|m\|(\Omega) K$)
and $Y$ is separable.
\item[(iii)] $L_1(\tilde{m})$ is weakly sequentially complete because $Y$ contains no isomorphic copy of~$c_0$,
see \cite[Theorem~3]{cur1} (cf., \cite{cal-alt-7,oka-alt5}). 
In general, $L_1(m)$ is not weakly sequentially complete, so the equality $L_1(\tilde{m})=L_1(m)$ can fail. 
\end{enumerate}
\end{rem}

An operator $T$ from a Banach space~$Z$ to~$X$ is said to be {\em $(2,1)$-summing} if $\sum_{n\in \N}\|T(z_n)\|^2<\infty$ for every 
unconditionally convergent series $\sum_{n\in \N}z_n$ in~$Z$. A glance at inequality~\eqref{eqn:21infty} reveals that $I^{(\infty)}_{\tilde{m}}$
is $(2,1)$-summing whenever $I^{(\infty)}_{m}$ is $1$-summing, which in turn is equivalent to saying that $m$ has finite variation 
(see, e.g., \cite[Corollary~4, p.~164]{die-uhl-J}).

On the other hand, a result of Pisier~\cite{pis2} (cf., \cite[Theorem~10.9]{die-alt}) characterizes
$(2,1)$-summing operators from a $C(K)$ space (like $L_\infty(m)$) to a Banach space as those operators which factor through a Lorentz space $L_{2,1}(\mu)$ 
for some regular Borel probability on~$K$, via the canonical map from $C(K)$ to $L_{2,1}(\mu)$; the hardest part of this result consists in obtaining an inequality
similar to~\eqref{eqn:21infty}. 

Inequality~\eqref{eqn:21infty} 
and the easier part of Pisier's argument can be combined to obtain that $I^{(\infty)}_{\tilde{m}}$
can be extended to $L_{2,1}(|m|)$ whenever $m$ has finite variation. In fact,  
$I^{(\infty)}_{\tilde{m}}$ can always be extended to the Lorentz type space $L_{2,1}(\|m\|)$ associated to the semivariation of~$m$,
no matter whether $m$ has or not finite variation. We include this result
in Proposition~\ref{pro:Pisier} below. Its proof is omitted since
it can be done just by imitating some parts of the proof of \cite[Theorem~10.9]{die-alt}.
Let us recall that $L_{2,1}(\|m\|)$ is the set of all ($m$-a.e. equivalence classes of) $\Sigma$-measurable functions $f:\Omega \to \erre$
for which 
$$
	\|f\|_{L_{2,1}(\|m\|)}:=2\int_0^\infty \sqrt{\|m\|_f(t)} \, dt < \infty,
$$
where $\|m\|_f$ is the distribution function of~$f$ with respect to~$\|m\|$, defined by $\|m\|_f(t):=\|m\|(\{\omega \in \Omega: |f(\omega)|>t\})$ 
for all $t>0$. The linear space $L_{2,1}(\|m\|)$ is a Banach lattice when equipped with a certain norm which is 
equivalent to the quasi-norm $\|\cdot\|_{L_{2,1}(\|m\|)}$. 
In general, $L_\infty(m) \sub L_{2,1}(\|m\|) \sub L_1(m)$ with continuous inclusions. 
Note that if $m$ has finite variation, then 
the Lorentz space $L_{2,1}(|m|)$ embeds continuously into $L_{2,1}(\|m\|)$.
The Lorentz spaces associated to the semivariation of a vector measure
were introduced in~\cite{fer-alt-6} and studied further in~\cite{cam-alt-5}.

\begin{pro}\label{pro:Pisier}
In the setting of Theorem~\ref{theo:Iinfty}, $I^{(\infty)}_{\tilde{m}}$ factors as
$$
	\xymatrix@R=3pc@C=3pc{L_\infty(m) 
	\ar@{->}[dr]_{I^{(\infty)}_{\tilde{m}}}
	\ar[r]^{I^{(\infty)}_m} \ar[d]_{i} & X\\
	 L_{2,1}(\|m\|)  \ar[r]^{S}   & Y \ar[u]_{J} \\
	}
$$
where $i$ is the identity operator and $S$ is an operator. In particular, if $m$ has finite variation, then 
$I^{(\infty)}_{\tilde{m}}$ can be extended to the Lorentz space $L_{2,1}(|m|)$.
\end{pro}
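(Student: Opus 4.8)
The plan is to reproduce the ``soft half'' of Pisier's factorization theorem, using the interpolation-type inequality~\eqref{eqn:21infty} from Theorem~\ref{theo:Iinfty} in place of the delicate $(2,1)$-summing estimate that constitutes the hard part of \cite[Theorem~10.9]{die-alt}. Concretely, I would first establish that the restriction of $I^{(\infty)}_{\tilde{m}}$ to ${\rm sim}\,\Sigma$ is continuous when ${\rm sim}\,\Sigma$ carries the quasi-norm of $L_{2,1}(\|m\|)$, i.e. that there is a constant $C'>0$ such that $\|I^{(\infty)}_{\tilde{m}}(f)\|\leq C'\|f\|_{L_{2,1}(\|m\|)}$ for every $f\in{\rm sim}\,\Sigma$.

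For a non-negative $f\in{\rm sim}\,\Sigma$, write $f=\sum_{k=1}^{n}a_k\chi_{A_k}$ with the $A_k$ pairwise disjoint, $a_1>a_2>\dots>a_n>0$, set $a_{n+1}:=0$ and $B_k:=A_1\cup\dots\cup A_k$. The telescoping identity $f=\sum_{k=1}^{n}(a_k-a_{k+1})\chi_{B_k}$ gives $I^{(\infty)}_{\tilde{m}}(f)=\sum_{k=1}^{n}(a_k-a_{k+1})\tilde{m}(B_k)$, while the ``in particular'' clause of Theorem~\ref{theo:Iinfty}(iv) together with $\|m(B_k)\|\leq\|m\|(B_k)$ yields $\|\tilde{m}(B_k)\|\leq\sqrt{C\|m\|(\Omega)}\,\sqrt{\|m\|(B_k)}$. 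Since $\{|f|>t\}=B_k$ for $t\in[a_{k+1},a_k)$, the layer-cake formula gives $\|f\|_{L_{2,1}(\|m\|)}=2\sum_{k=1}^{n}(a_k-a_{k+1})\sqrt{\|m\|(B_k)}$, and combining these estimates produces
\[
	\|I^{(\infty)}_{\tilde{m}}(f)\|\leq\tfrac{1}{2}\sqrt{C\|m\|(\Omega)}\,\|f\|_{L_{2,1}(\|m\|)}.
\]
For a general real $f\in{\rm sim}\,\Sigma$ I would apply this to $f^{+}$ and $f^{-}$ and use that $\|f^{\pm}\|_{L_{2,1}(\|m\|)}\leq\|f\|_{L_{2,1}(\|m\|)}$ (the quasi-norm depends only on the distribution function, which is monotone in $|f|$ pointwise), obtaining the claimed bound with $C'=\sqrt{C\|m\|(\Omega)}$.

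Given this, the factorization follows by a routine density-and-extension argument. Since ${\rm sim}\,\Sigma$ is dense in $L_{2,1}(\|m\|)$ (order continuity of the norm of these Lorentz spaces, see \cite{fer-alt-6,cam-alt-5}) and $L_{2,1}(\|m\|)$ is a Banach space whose genuine norm is equivalent to the quasi-norm, $I^{(\infty)}_{\tilde{m}}|_{{\rm sim}\,\Sigma}$ extends uniquely to an operator $S:L_{2,1}(\|m\|)\to Y$. To see that $S\circ i=I^{(\infty)}_{\tilde{m}}$ on all of $L_\infty(m)$, note that both sides are $\|\cdot\|_{L_\infty(m)}$-continuous on $L_\infty(m)$ (for $S\circ i$ because $i$ is the continuous inclusion $L_\infty(m)\hookrightarrow L_{2,1}(\|m\|)$) and agree on ${\rm sim}\,\Sigma$, which is dense in $L_\infty(m)$ for the sup-norm; hence they coincide. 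Commutativity of the outer triangle, $I^{(\infty)}_m=J\circ I^{(\infty)}_{\tilde{m}}=J\circ S\circ i$, is already known from Theorem~\ref{theo:Iinfty}. Finally, if $m$ has finite variation then $L_\infty(m)=L_\infty(|m|)\sub L_{2,1}(|m|)$ and $L_{2,1}(|m|)$ embeds continuously into $L_{2,1}(\|m\|)$, so precomposing $S$ with this inclusion extends $I^{(\infty)}_{\tilde{m}}$ to $L_{2,1}(|m|)$.

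The only step requiring genuine care is the first one: organizing the Lorentz quasi-norm via the layer-cake representation, rewriting an arbitrary simple function in the ``chain form'' $\sum(a_k-a_{k+1})\chi_{B_k}$, and disposing of signs through $f^{\pm}$. Once this is in hand everything else is soft, which is precisely why the proof may be left to the reader as a transcription of the corresponding portion of \cite[Theorem~10.9]{die-alt}.
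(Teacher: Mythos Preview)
Your proposal is correct and is precisely the argument the paper has in mind: the paper omits the proof entirely, pointing instead to the relevant parts of \cite[Theorem~10.9]{die-alt}, and what you have written is exactly that transcription---the layer-cake/chain decomposition of simple functions combined with the set-function estimate $\|\tilde{m}(A)\|\leq\sqrt{C\|m\|(\Omega)}\sqrt{\|m\|(A)}$ from Theorem~\ref{theo:Iinfty}(iv), followed by the routine density extension. There is nothing to add.
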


\begin{rem}\label{rem:Iminfty-lattice}
\rm Suppose that $X$ is a Banach lattice. If we apply the DFJP-LNO method to the closed convex solid hull $K_0$ of
$\frac{1}{\|m\|(\Omega)}I^{(\infty)}_m(B_{L_\infty(m)})$, then we get a factorization as
$$
	\xymatrix@R=3pc@C=3pc{L_\infty(m)
	\ar[r]^{I^{(\infty)}_m} \ar[d]_{T_{K_0}} & X\\
	Y_{K_0}  \ar@{->}[ur]_{J_{K_0}}  & \\
	}
$$
where $Y_{K_0}$ is a Banach lattice and both $J_{K_0}$ and $J^*_{K_0}$
are interval preserving lattice homomorphisms
(imitate the proof of \cite[Theorem~5.41]{ali-bur}). 
Moreover:
\begin{enumerate}
\item[(i)] The statements of Theorem~\ref{theo:Iinfty}, Remark~\ref{rem:Iinfty}(i)-(ii) and Proposition~\ref{pro:Pisier}
also hold for this factorization, with the exception that $Y_{K_0}$ need not be reflexive.
\item[(ii)] $Y_{K_0}$ is reflexive whenever $X$ has the property that the solid hull of 
any relatively weakly compact set is relatively weakly compact. 
This happens if either $X$ contains no isomorphic copy of~$c_0$ (see, e.g., \cite[Theorems~4.39 and 4.60]{ali-bur})
or $X$ is order continuous and atomic, see \cite[Theorem~2.4]{che-wic}.
\item[(iii)] $\tilde m$ is positive and $Y_{K_0}$ is reflexive whenever $m$ is positive. Indeed, in this case an easy computation shows that
$[-m(\Omega),m(\Omega)]$ is the solid hull of~$\mathcal{R}(m)$ and that
$$
	K_0=\frac{1}{\|m\|(\Omega)}[-m(\Omega),m(\Omega)].
$$ 
Now, from \cite[Theorem~2.4]{fer-nar} it follows that $K_0$ is $L$-weakly compact 
and so it is weakly compact (see, e.g., \cite[Proposition~3.6.5]{mey2}). 
 
\item[(iv)] In general, $Y^*_{K_0}$ contains no isomorphic copy of~$c_0$, because
$I^{(\infty)}_m$ is weakly compact (imitate the proof of \cite[Theorem~5.43]{ali-bur}).
\end{enumerate}
\end{rem}

We next apply the DFJP-LNO factorization procedure to $I_m$. The following theorem 
gathers some consequences of it.

\begin{theo}\label{theo:Im}
Let us consider the DFJP-LNO factorization of~$I_m$, as follows
$$
	\xymatrix@R=3pc@C=3pc{L_1(m)
	\ar[r]^{I_m} \ar[d]_{T} & X\\
	Y  \ar@{->}[ur]_{J}  & \\
	}
$$
Let $\tilde{m}:\Sigma\to Y$ be the countably additive vector measure defined by 
$\tilde{m}(A):=T(\chi_A)$ for all $A\in \Sigma$ (see Theorem~\ref{theo:OR}). Then:
\begin{enumerate}
\item[(i)] $L_1(\tilde{m})=L_1(m)$ with equal norms, i.e.,
$$
	\|f\|_{L_1(m)}=\|f\|_{L_1(\tilde{m})}
	\quad\text{for all $f\in L_1(m)$.}
$$
In particular, $\|m\|(A) = \|\tilde{m}\|(A)$ for all $A\in \Sigma$.
\item[(ii)] There is a universal constant $C>0$ such that
\begin{equation}\label{eqn:21}
	\|I_{\tilde{m}}(f)\|^2 \leq C \|f\|_{L_1(m)} \|I_m(f)\| 
	\quad\text{for every $f\in L_1(m)$.}
\end{equation}
In particular, $\|\tilde{m}(A)\|^2 \leq C \|m\|(A) \|m(A)\|$
for all $A\in \Sigma$.
\item[(iii)] $|m|(A)\leq |\tilde{m}|(A) \leq \sqrt{C} |m|(A)$ for all $A\in \Sigma$. Therefore,
$\tilde{m}$ has finite (resp., $\sigma$-finite) variation whenever $m$ does.
\item[(iv)] If $m$ has finite variation and a Bochner derivative $G$ with respect to~$|m|$, then 
$\tilde{m}$ has a Bochner derivative $\tilde{F}$ with respect to $|\tilde{m}|$ and 
$$
	\int_\Omega \|\tilde{F}\|^2 \, d|\tilde{m}|
	\leq C \int_\Omega \|G\| \, d|m|.
$$
\end{enumerate}
\end{theo}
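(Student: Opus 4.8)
The plan is to read off parts~(i)--(iii) from what has already been established and to reserve the real work for part~(iv). Recall that the DFJP-LNO factorization of~$I_m$ is the one associated with the closed absolutely convex set $K:=\overline{I_m(B_{L_1(m)})}$ (note $\|I_m\|=1$) and the parameter $a=\bar a$, so that $Y=X_K$, $T=T_K=I_{\tilde m}$, $J=J_K$, $\|T\|=\|J\|=1$, and Theorem~\ref{theo:OR} is available; put $C:=\frac{1}{4}+\frac{1}{2\ln \bar a}$, the universal constant in~\eqref{eqn:power2}. For~(i), the two-sided estimate of Theorem~\ref{theo:OR}(iii) collapses to $\|f\|_{L_1(m)}=\|f\|_{L_1(\tilde m)}$ because $\|T\|=\|J\|=1$, and specializing to $f=\chi_A$ gives $\|m\|(A)=\|\tilde m\|(A)$. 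For~(ii), I would note that for $f\in B_{L_1(m)}$ the vector $I_m(f)$ lies in $K$ and $I_{\tilde m}(f)=T(f)$ is this same vector viewed in $Y=X_K$, so~\eqref{eqn:power2} yields $\|I_{\tilde m}(f)\|^2\le C\|I_m(f)\|$ at once; homogenizing and then taking $f=\chi_A$ gives the stated inequalities. For~(iii), the inequality $|m|(A)\le|\tilde m|(A)$ is immediate from $m=J\circ\tilde m$ and $\|J\|=1$, and for the reverse one I would apply~(ii) to each piece of a finite measurable partition $\{A_i\}$ of~$A$ and then Cauchy--Schwarz, $\sum_i\|\tilde m(A_i)\|\le\sqrt C\big(\sum_i\|m\|(A_i)\big)^{1/2}\big(\sum_i\|m(A_i)\|\big)^{1/2}$, bounding both factors by $|m|(A)$ (using $\|m\|(B)\le|m|(B)$ for the first factor and the definition of variation for the second) and finally taking the supremum over partitions; the statements on finite and $\sigma$-finite variation then follow.

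For~(iv), assume $m$ has finite variation and a Bochner derivative $G=dm/d|m|$. The key preliminary step is that $G$ takes its values $|m|$-a.e.\ in $K$: for $A\in\Sigma$ with $|m|(A)>0$ we have $\|\chi_A/|m|(A)\|_{L_1(m)}=\|m\|(A)/|m|(A)\le1$, hence $m(A)/|m|(A)=I_m(\chi_A/|m|(A))\in I_m(B_{L_1(m)})\subseteq K$, and since $K$ is closed and convex, the standard fact that a Bochner integrable $G$ with $m(A)=\int_A G\,d|m|$ satisfies that $G(\omega)$ belongs to the closed convex hull of $\{m(A)/|m|(A):|m|(A)>0\}$ for $|m|$-a.e.\ $\omega$ gives $G(\omega)\in K$ $|m|$-a.e.; after discarding a null set, $G(\Omega)\subseteq K$. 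Then, letting $F:\Omega\to Y$ be the function with $J\circ F=G$, Lemma~\ref{lem:function} applied with $\mu=|m|$ shows $F$ is strongly $|m|$-measurable; moreover $\|F(\omega)\|^2\le C\|G(\omega)\|$ by~\eqref{eqn:power2}, so (by Cauchy--Schwarz and $|m|(\Omega)<\infty$) $F$ is Bochner $|m|$-integrable with $\int_\Omega\|F\|^2\,d|m|\le C\int_\Omega\|G\|\,d|m|$, and applying the injective operator~$J$ to $\int_A F\,d|m|$ and using $J\circ F=G$ together with $m=J\circ\tilde m$ gives $\int_A F\,d|m|=\tilde m(A)$ for all $A$. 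Finally I would pass from $|m|$ to $|\tilde m|$: by~(iii) $|m|\le|\tilde m|$, so $h:=d|m|/d|\tilde m|$ exists with $0\le h\le1$ $|\tilde m|$-a.e.; since $m$-null and $\tilde m$-null sets agree, $F$ is strongly $|\tilde m|$-measurable, hence so is $\tilde F:=hF$, and $\int_A\tilde F\,d|\tilde m|=\int_A F\,d|m|=\tilde m(A)$, i.e.\ $\tilde F$ is the Bochner derivative of $\tilde m$ with respect to $|\tilde m|$. The estimate is then $\int_\Omega\|\tilde F\|^2\,d|\tilde m|=\int_\Omega\|F\|^2 h^2\,d|\tilde m|\le\int_\Omega\|F\|^2 h\,d|\tilde m|=\int_\Omega\|F\|^2\,d|m|\le C\int_\Omega\|G\|\,d|m|$, using $h^2\le h$ and the change of variables $d|m|=h\,d|\tilde m|$.

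I expect the only non-mechanical point to be the a.e.\ membership $G(\omega)\in K$ in part~(iv): this is the one place that needs a genuine measure-theoretic argument, which I would carry out via vector-valued martingale convergence along an increasing sequence of finite partitions of~$\Omega$ whose generated $\sigma$-algebra carries a version of~$G$, observing that the corresponding martingale of conditional expectations takes values in $K$ on each atom (by the averages computation above, since $K$ is absolutely convex and contains~$0$) and converges to~$G$ both in $L_1(|m|;X)$ and $|m|$-a.e. Everything else is bookkeeping built on Theorem~\ref{theo:OR}, property~\eqref{eqn:power2} of the DFJP-LNO construction, and Lemma~\ref{lem:function}; in particular the passage between the mutually equivalent measures $|m|$ and $|\tilde m|$ uses only that $1\le d|\tilde m|/d|m|\le\sqrt C$, which is exactly part~(iii).
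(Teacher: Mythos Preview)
Your proposal is correct and follows essentially the same route as the paper: (i) and (ii) are derived from Theorem~\ref{theo:OR} with $\|T\|=\|J\|=1$ and the inequality~\eqref{eqn:power2}, (iii) from~(ii) via Cauchy--Schwarz (the paper arranges the sums as $\sum_i \|\tilde m(A_i)\|^2/|m|(A_i)$ and $\sum_i |m|(A_i)$, but your pairing works equally well), and (iv) by lifting $G$ to~$F$ through $J_K$, invoking Lemma~\ref{lem:function}, and passing from $|m|$ to $|\tilde m|$ via the density $h=d|m|/d|\tilde m|\in[0,1]$. The only point where you go beyond the paper is the a.e.\ inclusion $G(\omega)\in K$: the paper simply quotes this as a known lemma (Kupka; Cascales--Kadets--Rodr\'{\i}guez), whereas you sketch the martingale-average argument that underlies it --- and your direct observation that $m(A)/|m|(A)=I_m(\chi_A/|m|(A))\in I_m(B_{L_1(m)})\subseteq K$ already gives the averages in~$K$ without needing the intermediate set $\overline{\{m(A)/|m|(A)\}}$.
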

\begin{proof} The factorization is done by using the set $K:=\overline{I_m(B_{L_1(m)})}$,
so that $T=T_K$, $J=J_K$ and $\|T\|=\|J\|=1$.

(i) follows from Theorem~\ref{theo:OR}, while (ii) is consequence of inequality~\eqref{eqn:power2}, which 
in this case reads as $\|I_{\tilde{m}}(g)\|^2 \leq C \|I_m(g)\|$ for every $g\in B_{L_1(m)}$, where we write
$C:=\frac{1}{4}+\frac{1}{2\ln \bar a}$ and $\bar a$ is as in Subsection~\ref{subsection:dfjp}.

(iii) Fix $A\in \Sigma$. The inequality $|m|(A)\leq |\tilde{m}|(A)$ follows at once from
the fact that $\|J\|=1$. On the other hand, the inequality $|\tilde{m}|(A)\leq \sqrt{C}|m|(A)$ is obvious
if $|m|(A)$ is infinite, so we assume that $|m|(A)<\infty$. 
Now, given finitely many pairwise disjoint $A_1,\dots,A_n \in \Sigma$ with $A_i\sub A$, we have
\begin{equation}\label{eqn:2varBIS}
	\sum_{i=1}^n \frac{\|\tilde{m}(A_i)\|^2}{|m|(A_i)}
	\leq
	\sum_{i=1}^n \frac{\|\tilde{m}(A_i)\|^2}{\|m\|(A_i)}
	\stackrel{\text{(ii)}}{\leq}
	C \sum_{i=1}^n \|m(A_i)\| \leq C|m|(A)
\end{equation}
(with the convention $\frac{0}{0}=0$) and so Holder's inequality yields 
$$
	\sum_{i=1}^n \|\tilde{m}(A_i)\|
	 \leq
	\Big( \sum_{i=1}^n \frac{\|\tilde{m}(A_i)\|^2}{|m|(A_i)} \Big)^{1/2} \cdot
	\Big( \sum_{i=1}^n |m|(A_i)\Big)^{1/2} \\ \stackrel{\eqref{eqn:2varBIS}}{\leq} 
	\sqrt{C} |m|(A).
$$
This shows that $|\tilde{m}|(A) \leq \sqrt{C} |m|(A)$.

(iv) Let $G:\Omega \to X$ be a Bochner derivative of $m$ with respect to~$|m|$.
Then there is $E\in \Sigma$ with $|m|(\Omega \setminus E)=0$ such that 
\begin{multline*}
	G(E)\sub H:=\overline{\Big\{\frac{m(A)}{|m|(A)}: \, A\in \Sigma, \, |m|(A)>0 \Big\}}
	 \\ \sub
	\overline{{\rm aco}}\Big(\Big\{\frac{m(A)}{\|m\|(A)}: \, A\in \Sigma, \, \|m\|(A)>0 \Big\}\Big) \sub K
\end{multline*}
(see, e.g., \cite[Lemma~2.3]{kup} or \cite[Lemma 3.7]{cas-kad-rod-5}). We can assume without loss of generality that $E=\Omega$.
Let $F:\Omega \to Y$ be the function satisfying $J\circ F=G$. Then $F$ is 
strongly $|m|$-measurable (by Lemma~\ref{lem:function}). Note that $F$ is bounded (we have $F(\Omega) \sub K \sub B_{Y}$) 
and so $F$ is Bochner integrable with respect to~$|m|$. 
Since $J$ is injective, we have $\int_A F \, d|m|=\tilde{m}(A)$ for all $A\in \Sigma$.  

Note that $|m|$ and $|\tilde{m}|$ have the same null sets, hence $F$ is strongly $|\tilde{m}|$-measurable as well.
Since $F$ is bounded, it is Bochner integrable with respect to~$|\tilde{m}|$. On the other hand,
let $\varphi$ be the Radon-Nikod\'{y}m derivative of~$|m|$ with respect to~$|\tilde{m}|$. Then
$0\leq \varphi \leq 1$ $|\tilde{m}|$-a.e. (because $|m|(A)\leq |\tilde{m}|(A)$ for all $A\in \Sigma$)
and, therefore, the product $\tilde{F}:=\varphi F:\Omega \to Y$ is Bochner integrable with respect to~$|\tilde{m}|$, with integral
$\int_A \tilde F \, d|\tilde{m}|=\int_A F \, d|m|=\tilde{m}(A)$ for all $A\in \Sigma$.

Finally, by~\eqref{eqn:power2} and the inclusion $G(\Omega)\sub K$, we have
$\|F(\omega)\|^2\leq C \|G(\omega)\|$ for every $\omega\in \Omega$, so that
\begin{multline*}
	\int_\Omega \|\tilde F \|^2 \, d|\tilde{m}|=
	\int_\Omega \varphi^2 \|F\|^2 \, d|\tilde{m}|=
	\int_\Omega \varphi \|F\|^2 \, d|m|
	\\ \leq \int_\Omega \|F\|^2 \, d|m|	
	\leq C \int_\Omega \|G\| \, d|m|,
\end{multline*}
as we wanted to prove.
\end{proof}

\begin{rem}\label{rem:Im}
\rm In the setting of Theorem~\ref{theo:Im}, the following statements hold:
\begin{enumerate}
\item[(i)] If $\mathcal{R}(m)$ is relatively norm-compact, then so is $\mathcal{R}(\tilde{m})$
(because $J|_K$ is a norm-to-norm homeomorphism and $\mathcal{R}(m) \sub \|m\|(\Omega) K$).
\item[(ii)] If $I_m$ is compact, then $J$ and $I_{\tilde{m}}$ are compact as well.
\item[(iii)] If $I_m$ is completely continuous, then so is $I_{\tilde{m}}$. This is also an immediate consequence of the fact that $J|_K$
is a norm-to-norm homeomorphism. It was proved in \cite[Lemma~3.2(ii)]{rod15}
for the DFJP factorization, with a rather more complicated proof, under the additional assumption that $Y$ contains no isomorphic copy of~$\ell_\infty$.
\item[(iv)] If $m$ has finite variation, then the $2$-variation of~$\tilde{m}$ with respect to~$|m|$ is finite and less than or equal to $\sqrt{C|m|(\Omega)}$.
This is a consequence of inequality~\eqref{eqn:2varBIS}.
\end{enumerate}
\end{rem}

We arrive at the isometric version of \cite[Proposition~2.1]{oka-ric}
which we already mentioned in the introduction.

\begin{cor}\label{cor:Iweaklycompact} 
Suppose that $I_m$ is weakly compact. Then there exist a reflexive Banach space~$Y$, 
a countably additive vector measure $\tilde{m}:\Sigma\to Y$ and an injective operator $J: Y \to X$
such that $m=J\circ \tilde{m}$ and $L_1(m)=L_1(\tilde{m})$ with equal norms.
\end{cor}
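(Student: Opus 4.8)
The plan is to simply specialize Theorem~\ref{theo:Im} to the case where $I_m$ is weakly compact, so that essentially everything has already been done. First I would recall the DFJP-LNO factorization of the integration operator $I_m:L_1(m)\to X$ exactly as set up in Theorem~\ref{theo:Im}, using the closed absolutely convex set $K:=\overline{I_m(B_{L_1(m)})}$ (note $\|I_m\|=1$, so this is the right normalization). This produces the factorization $I_m=J\circ T$ with $J=J_K:Y\to X$ a norm-one operator, $T=T_K:L_1(m)\to Y$ a norm-one operator, and $Y=X_K$ a Banach space; by property~(iv) of the DFJP-LNO construction, $J_K^{**}$ is injective, hence in particular $J$ itself is injective. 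Then $\tilde{m}:\Sigma\to Y$ defined by $\tilde{m}(A):=T(\chi_A)$ is, by Theorem~\ref{theo:OR}(i), a countably additive vector measure with $m=J\circ\tilde{m}$.

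Next I would observe that the weak compactness of $I_m$ is exactly what upgrades $Y$ to a reflexive space: by property~(ix) of the DFJP-LNO procedure (equivalently property~(viii) applied to $K$, since $I_m$ weakly compact means $\overline{I_m(B_{L_1(m)})}$ is weakly compact), the space $X_K=Y$ is reflexive. This is the only place where the hypothesis is used, and it is immediate from the cited properties.

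Finally, the equality of norms $\|f\|_{L_1(m)}=\|f\|_{L_1(\tilde{m})}$ for all $f\in L_1(m)$ (hence in particular $L_1(m)=L_1(\tilde{m})$ as Banach lattices and $\|m\|(A)=\|\tilde{m}\|(A)$ for all $A$) is precisely the content of Theorem~\ref{theo:Im}(i), which in turn comes from Theorem~\ref{theo:OR}(iii) together with the fact that in the DFJP-LNO factorization $\|T\|=\|J\|=1$, so the two-sided estimate $\|T\|^{-1}\|f\|_{L_1(\tilde{m})}\le\|f\|_{L_1(m)}\le\|J\|\|f\|_{L_1(\tilde{m})}$ collapses to an equality. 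Assembling these three observations gives the corollary.

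Honestly, there is no real obstacle here: the work has been front-loaded into Theorem~\ref{theo:Im} and the general properties of the DFJP-LNO construction recalled in Subsection~\ref{subsection:dfjp}. The only thing to be careful about is citing the right property for reflexivity (property~(ix), or~(viii) for the set version) and noting that injectivity of $J$ follows from injectivity of $J_K^{**}$ in property~(iv); beyond that the proof is a two-line appeal to Theorem~\ref{theo:Im}.
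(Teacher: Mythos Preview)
Your proposal is correct and matches the paper's approach exactly: the corollary is stated without a separate proof precisely because it is an immediate specialization of Theorem~\ref{theo:Im}, with the weak compactness of~$I_m$ ensuring reflexivity of~$Y$ via property~(ix) (equivalently~(viii)) of the DFJP-LNO construction. Your unpacking of the details (injectivity of~$J$ from~(iv), the isometric equality from $\|T\|=\|J\|=1$ in Theorem~\ref{theo:OR}(iii)) is accurate and more explicit than the paper itself.
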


Suppose that $I_m$ is weakly compact. It is known that:
\begin{enumerate}
\item[(i)] If $m$ has finite variation, then the composition of $I_m$
and the continuous embedding of $L_1(|m|)$ into~$L_1(m)$ (see, e.g., \cite[Lemma~3.14]{oka-alt}) is a weakly compact operator and so it is representable 
(see, e.g., \cite[p.~75, Theorem~12]{die-uhl-J}). Hence $m$ admits a Bochner derivative with respect to~$|m|$. 
\item[(ii)] If $m$ has $\sigma$-finite variation, then $\mathcal{R}(m)$ is relatively norm-compact, see \cite[Corollary~3.11]{cal-alt-6}
(cf., \cite[Claim~2]{cur3}). 
\end{enumerate}

The previous statements can be improved as follows.

\begin{cor}\label{cor:clrs}
Suppose that $I_m$ is weakly compact.
\begin{enumerate}
\item[(i)] If $m$ has finite variation, then it admits a Bochner derivative with respect to any control measure.
\item[(ii)] If $m$ has $\sigma$-finite variation, then it admits a strongly measurable Pettis integrable derivative 
with respect to any control measure.
\end{enumerate}
\end{cor}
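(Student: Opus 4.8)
The plan is to transfer the problem to the reflexive factor provided by Corollary~\ref{cor:Iweaklycompact} and then invoke the Radon-Nikod\'{y}m property. First I would apply the DFJP-LNO factorization to $I_m$ as in Theorem~\ref{theo:Im}; since $I_m$ is weakly compact, the resulting Banach space $Y$ is reflexive (this is the content of Corollary~\ref{cor:Iweaklycompact}) and therefore has the Radon-Nikod\'{y}m property, and we obtain an injective operator $J:Y\to X$ with $m=J\circ\tilde m$, where $\tilde m:\Sigma\to Y$ is the associated countably additive vector measure. Now fix an arbitrary control measure $\mu$ of $m$. Since $J$ is injective we have $\mathcal{N}(m)=\mathcal{N}(\tilde m)$ (Lemma~\ref{lem:factorizationVM}(i)), so $\mu$ is also a control measure of $\tilde m$; in particular $|\tilde m|\ll\mu$. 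The strategy is to build a Radon-Nikod\'{y}m derivative of $\tilde m$ with respect to $\mu$ and then compose it with $J$.

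For part~(i), if $m$ has finite variation then so does $\tilde m$, by Theorem~\ref{theo:Im}(iii), and $|\tilde m|(\Omega)<\infty$. Since $Y$ has the Radon-Nikod\'{y}m property and $\tilde m$ is $\mu$-continuous and of bounded variation, there is a Bochner integrable function $\tilde g:\Omega\to Y$ with $\tilde m(A)=\int_A\tilde g\,d\mu$ for every $A\in\Sigma$. Then $g:=J\circ\tilde g:\Omega\to X$ is Bochner integrable (as $J$ is a bounded operator) and $m(A)=J(\tilde m(A))=\int_A g\,d\mu$ for every $A\in\Sigma$, i.e., $g$ is a Bochner derivative of $m$ with respect to $\mu$.

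For part~(ii), if $m$ has $\sigma$-finite variation then so does $\tilde m$ (Theorem~\ref{theo:Im}(iii)), so I would fix an increasing sequence $(\Omega_n)$ in $\Sigma$ with $\bigcup_n\Omega_n=\Omega$ and $|\tilde m|(\Omega_n)<\infty$, and put $E_1:=\Omega_1$ and $E_{n+1}:=\Omega_{n+1}\setminus\Omega_n$. Applying the Radon-Nikod\'{y}m property to the restrictions of $\tilde m$ and $\mu$ to each $\Omega_n$ yields Bochner integrable functions $\tilde g_n:\Omega_n\to Y$ with $\tilde m(A)=\int_A\tilde g_n\,d\mu$ for $A\in\Sigma$, $A\sub\Omega_n$; gluing them along the partition $(E_n)$ produces a function $\tilde g:\Omega\to Y$ which is strongly $\mu$-measurable, being a pointwise limit of strongly measurable functions. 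To check that $\tilde g$ is Pettis integrable with respect to $\mu$ on all of $\Omega$, note that for each $y^*\in Y^*$ the real measure $y^*\tilde m$ has finite total variation and has density $y^*\tilde g$ with respect to $\mu$ on every $\Omega_n$, whence $\int_\Omega|y^*\tilde g|\,d\mu\leq|y^*\tilde m|(\Omega)<\infty$; then, writing $A$ as the disjoint union $\bigcup_n(A\cap E_n)$ and combining the countable additivity of $\tilde m$ in $Y$ with the $\sigma$-additivity of the integral of $y^*\tilde g$, one obtains $y^*(\tilde m(A))=\int_A y^*\tilde g\,d\mu$ for all $A\in\Sigma$ and $y^*\in Y^*$. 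Hence $\tilde m(A)$ is the Pettis integral of $\tilde g$ over $A$, so $\tilde g$ is a strongly measurable Pettis integrable derivative of $\tilde m$ with respect to $\mu$. Finally $g:=J\circ\tilde g:\Omega\to X$ is strongly $\mu$-measurable, satisfies $x^*g=(J^*x^*)\tilde g\in L_1(\mu)$ for every $x^*\in X^*$, and $x^*(m(A))=(J^*x^*)(\tilde m(A))=\int_A x^*g\,d\mu$ for all $A\in\Sigma$; therefore $g$ is a strongly measurable Pettis integrable derivative of $m$ with respect to $\mu$.

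The two appeals to the Radon-Nikod\'{y}m property and the transport through $J$ are routine. The one delicate point is in part~(ii): the function obtained by gluing is only \emph{locally} Bochner integrable (Bochner integrable on each $\Omega_n$), and upgrading this to genuine Pettis integrability over $\Omega$ is exactly where the finiteness of the total variation of the scalar measures $y^*\tilde m$ and the countable additivity of $\tilde m$ in the reflexive space $Y$ are used.
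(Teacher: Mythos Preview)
Your proof is correct and follows essentially the same route as the paper: factor $I_m$ through a reflexive space~$Y$ via Theorem~\ref{theo:Im}, note that $\tilde m$ inherits finite (resp.\ $\sigma$-finite) variation by Theorem~\ref{theo:Im}(iii), invoke the Radon--Nikod\'ym property of~$Y$ to obtain a derivative of~$\tilde m$ with respect to~$\mu$, and then push it forward through~$J$. The paper's argument is essentially a one-line appeal to these facts; your version supplies the explicit gluing construction and the verification of Pettis integrability in the $\sigma$-finite case that the paper leaves implicit, but the underlying idea is the same.
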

\begin{proof} Let us consider a factorization of~$I_m$ as in Theorem~\ref{theo:Im}. Since $I_m$ is weakly compact,
$Y$ is reflexive.
Observe that $m$ and $\tilde{m}$ have the same control measures, because $\mathcal{N}(m)=\mathcal{N}(\tilde{m})$. 
Let $\mu$ be a control measure of both $m$ and $\tilde{m}$. 
If $m$ has finite (resp., $\sigma$-finite) variation, then the same holds for~$\tilde{m}$. By the Radon-Nikod\'{y}m property of~$Y$, $\tilde{m}$ has a Bochner
(resp., strongly measurable Pettis) derivative with respect to~$\mu$, say $F:\Omega \to Y$. The composition $J\circ F$ is then 
a Bochner (resp., strongly measurable Pettis) derivative of~$m$ with respect to~$\mu$.
\end{proof}

\begin{rem}\label{rem:Im-lattice}
\rm Suppose that $X$ is a Banach lattice. As in Remark~\ref{rem:Iminfty-lattice}, we can apply the DFJP-LNO procedure to the closed convex solid hull $K_0$ of
$I_m(B_{L_1(m)})$ to obtain a factorization 
$$
	\xymatrix@R=3pc@C=3pc{L_1(m)
	\ar[r]^{I_m} \ar[d]_{T_{K_0}} & X\\
	Y_{K_0}  \ar@{->}[ur]_{J_{K_0}}  & \\
	}
$$
where $Y_{K_0}$ is a Banach lattice and both $J_{K_0}$ and $J^*_{K_0}$
are interval preserving lattice homomorphisms. The statements of Theorem~\ref{theo:Im} and Remark~\ref{rem:Im}
also hold for this factorization. If $I_m$ is weakly compact, then: (i)~$Y_{K_0}^*$ contains no isomorphic copy of~$c_0$
(imitate the proof of \cite[Theorem~5.43]{ali-bur}), and (ii)~$Y_{K_0}$ is reflexive whenever $X$ has the property that 
the solid hull of any relatively weakly compact set is relatively weakly compact.
\end{rem}



\bibliographystyle{amsplain}

\end{document}